
\documentclass[oneside,leqno,11pt]{article}
\usepackage{amssymb,amsmath,latexsym, stmaryrd,amsthm}

\setlength{\hoffset}{-1in}
\setlength{\voffset}{-1.5in}
\setlength{\oddsidemargin}{1in}
\setlength{\evensidemargin}{1in}
\setlength{\textwidth}{6.5in}
\setlength{\textheight}{8.5in}
\setlength{\topmargin}{1in}
\setlength{\baselineskip}{14pt}
\setlength{\parskip}{6pt}



\def\ga{\mathfrak{a}}

\def\gg{\mathfrak{g}}
\def\ggl{\mathfrak{gl}}

\def\gk{\mathfrak{k}}
\def\gl{\mathfrak{l}}
\def\gm{\mathfrak{m}}
\def\gn{\mathfrak{n}}

\def\gp{\mathfrak{p}}
\def\gq{\mathfrak{q}}
\def\gr{\mathfrak{r}}
\def\gs{\mathfrak{s}}
\def\gsl{\mathfrak{sl}}
\def\gso{\mathfrak{so}}
\def\gsu{\mathfrak{su}}
\def\gsp{\mathfrak{sp}}
\def\gt{\mathfrak{t}}
\def\gu{\mathfrak{u}}

\def\gz{\mathfrak{z}}
\def\gS{\mathfrak{S}}





\def\Ad{{\rm Ad}}

\def\trace{{\rm trace\,\,}}

\def\Ind{{\rm Ind\,}}



\def\C{\mathbb{C}}
\def\D{\mathbb{D}}
\def\E{\mathbb{E}}
\def\F{\mathbb{F}}

\def\H{\mathbb{H}}

\def\R{\mathbb{R}}

\def\Z{\mathbb{Z}}


\def\cC{\mathcal{C}}

\def\cF{\mathcal{F}}

\def\cH{\mathcal{H}}

\def\cU{\mathcal{U}}


\renewcommand{\thesection}{\arabic{section}}

\newtheorem{theorem}[equation]{Theorem}

\newtheorem{lemma}[equation]{Lemma}

\newtheorem{proposition}[equation]{Proposition}

\newtheorem{definition}[equation]{Definition}

\newtheorem{example}[equation]{Example}

\title{Principal series representations of infinite dimensional Lie groups, I:
Minimal parabolic subgroups}

\author{Joseph A. Wolf\footnote{Research partially supported by a Simons Foundation grant
\endgraf
AMS 2010 Subject Class: Primary 32L25; Secondary 22E46, 32L10}}

\date{}

\begin{document}
\maketitle

\abstract{We study the structure of minimal parabolic subgroups of the classical
infinite dimensional real simple Lie groups, corresponding to the
classical simple direct limit Lie algebras.  This depends on the recently
developed structure of parabolic subgroups and subalgebras
that are not necessarily
direct limits of finite dimensional parabolics.  We then 
discuss the use of that structure theory for the infinite dimensional
analog of the classical principal series representations.
We look at the unitary representation theory of the
classical lim--compact groups $U(\infty)$, $SO(\infty)$ and $Sp(\infty)$
in order to construct the inducing representations, and we indicate
some of the analytic considerations in the actual construction of the induced
representations.}

\section{Introduction}\label{sec0}
\setcounter{equation}{0}

This paper reports on some recent developments in a program of 
extending aspects of real semisimple group representation theory to 
infinite dimensional real Lie groups.  The
finite dimensional theory is entwined with the structure of parabolic
subgroups, and that structure has recently been worked out for the classical
direct limit groups such as $SL(\infty,\R)$ and $Sp(\infty;\R)$.  
Here we explore the consequences of that structure theory for the construction
of the counterpart of various Harish--Chandra series of representations,
specifically the principal series.
\smallskip

The representation theory of finite dimensional real semisimple Lie groups 
is based on the now--classical constructions and Plancherel Formula of 
Harish--Chandra.  Let $G$ be a real
semisimple Lie group, e.g. $SL(n;\R)$, $SU(p,q)$, $SO(p,q)$, \dots.  Then
one associates a series of representations to each conjugacy class of
Cartan subgroups.  Roughly speaking this goes as follows.
Let $Car(G)$ denote the set of conjugacy classes $[H]$ of Cartan subgroups 
$H$ of $G$.  Choose $[H] \in Car(G)$, $H \in [H]$, and an irreducible unitary 
representation $\chi$ of $H$.  Then we have a ``cuspidal'' parabolic subgroup 
$P$ of $G$ constructed from $H$, and a unitary representation 
$\pi_\chi$ of $G$ constructed from $\chi$ and $P$.  Let
$\Theta_{\pi_\chi}$ denote the distribution character of $\pi_\chi$\, .
The Plancherel Formula: if $f\in \cC(G)$, the Harish-Chandra Schwartz space,
then
\addtocounter{equation}{1}
\begin{equation}\label{planch}\tag{\thesection.\arabic{equation}}
f(x) = \sum_{[H] \in Car(G)} \,\,\int_{\widehat{H}}
        \Theta_{\pi_\chi}(r_xf) d\mu_{[H]}(\chi)
\end{equation}
where $r_x$ is right translation and $\mu_{[H]}$ is Plancherel measure
on the unitary dual $\widehat{H}$.
\smallskip

In order to consider any elements of this theory in the context of
real semisimple direct
limit groups, we have to look more closely at the construction of the
Harish--Chandra series that enter into (\ref{planch}).
\smallskip

Let $H$ be a Cartan subgroup of $G$.  It is stable under a
Cartan involution $\theta$, an involutive automorphism of $G$ whose
fixed point set $K = G^\theta$ is a maximal compactly embedded\footnote{A
subgroup of $G$ is {\sl compactly embedded} if it has compact image under
the adjoint representation of $G$.} subgroup.
Then $H$ has a $\theta$--stable decomposition
$T \times A$ where $T = H \cap K$ is the compactly embedded part 
and (using lower case Gothic letters for Lie algebras)
$\exp : \ga \to A$ is a bijection.  Then $\ga$ is commutative and acts
diagonalizably on $\gg$.  Any choice of positive $\ga$--root
system defines a parabolic subalgebra $\gp = \gm + \ga + \gn$ in $\gg$ 
and thus defines a parabolic subgroup $P = MAN$ in $G$.  If $\tau$ is an
irreducible unitary representation of $M$ and $\sigma \in \ga^*$ then
$\eta_{\tau,\sigma}: man \mapsto e^{i\sigma(\log a)}\tau(m)$ is a well
defined irreducible unitary representation of $P$.  The equivalence class
of the unitarily induced representation $\pi_{\tau,\sigma} =
\Ind_P^G(\eta_{\tau,\sigma})$ is independent of the choice of positive 
$\ga$--root system.  The group $M$ has (relative) discrete series 
representations,
and $\{\pi_{\tau,\sigma} \mid \tau \text{ is a discrete series rep of } M\}$
is the series of unitary representations associated to 
$\{\Ad(g)H \mid g \in G\}$.
\smallskip

One of the most difficult points here is dealing with the discrete series.
In fact the possibilities of direct limit representations of direct limit
groups are somewhat limited except in cases where one can pass cohomologies
through direct limits without change of cohomology degree.  See \cite{N}
for limits of holomorphic discrete series, \cite{NRW} for Bott--Borel--Weil
theory in the direct limit context, \cite{H} for some nonholomorphic
discrete series cases, and \cite{W5} for principal series of
classical type.  The principal series representations in (\ref{planch}) are
those for which $M$ is compactly embedded in $G$, equivalently the ones
for which $P$ is a minimal parabolic subgroup of $G$.  
\smallskip

Here we work out the structure of the minimal parabolic subgroups of
the finitary simple real Lie groups and discuss construction of
the associated principal
series representations.  As in the finite dimensional case, a minimal
parabolic has structure $P = MAN$.  Here $M = P \cap K$ is a (possibly infinite)
direct product of torus groups, compact classical groups such as $Spin(n)$,
$SU(n)$, $U(n)$ and $Sp(n)$, and their classical direct limits $Spin(\infty)$,
$SU(\infty)$, $U(\infty)$ and $Sp(\infty)$ (modulo intersections and discrete 
central subgroups).
\smallskip

Since this setting is not standard we must start by sketching the background.
In Section \ref{sec1} we recall the classical simple real direct limit 
Lie algebras and Lie groups.  There are no surprises.  Section \ref{sec2} 
sketches their relatively recent theory of complex parabolic subalgebras.  
It is a little bit complicated and there are some surprises.  Section 
\ref{sec3} carries those results over to real parabolic subalgebras.  There
are no new surprises.  Then in Sections \ref{sec4} and \ref{sec5} we deal 
with Levi components and Chevalley decompositions.  That completes the background.
\smallskip

In Section \ref{sec6} we examine the structure real group structure of 
Levi components of real parabolics.  Then we specialize this to minimal
self--normalizing 
parabolics in Section \ref{sec7}.  There the Levi components are locally
isomorphic to direct sums in an explicit way of subgroups that are either
the compact classical groups $SU(n)$, $SO(n)$ or $Sp(n)$, or
their limits $SU(\infty)$, $SO(\infty)$ or $Sp(\infty)$.  The Chevalley
(maximal reductive part) components are slightly more complicated, for
example involving extensions $1 \to SU(*) \to U(*) \to T^1 \to 1$ as
well as direct products with tori and vector groups.  The main result is
Theorem \ref{lang-alg}, which gives the structure of the minimal
self--normalizing parabolics in terms similar to those of the finite
dimensional case.  Proposition \ref{construct-p} then gives an explicit
construction for minimal parabolics with a given Levi factor.

In Section \ref{sec8} we discuss the various possibilities for the 
inducing representation.  There are many
good choices, for example tame representations or
more generally representations that are factors of type $II$.
The theory is at such an early stage that the best choice is not yet clear.

Finally, in Section \ref{sec9} we indicate construction of 
the induced representations in our infinite dimensional setting.  Smoothness
conditions do not introduce surprises, but unitarity is a problem, and
we defer details of that construction to \cite{W7} and applications to 
\cite{W8}. 
\smallskip

I thank Elizabeth Dan--Cohen and Ivan Penkov for many very helpful discussions
on parabolic subalgebras and Levi components.

\section{The Classical Simple Real Groups}\label{sec1}
\setcounter{equation}{0}

In this section we recall the real simple countably
infinite dimensional
locally finite (``finitary'') Lie algebras and the corresponding Lie groups.  
This material follows from results in \cite{B}, \cite{BS} and \cite{DCPW}.
\smallskip

We start with the three classical simple locally finite countable--dimensional 
Lie algebras $\gg_\C = \varinjlim \gg_{n,\C}$, and their real forms
$\gg_\R$.  The Lie algebras $\gg_\C$ are the classical direct limits,
\addtocounter{equation}{1}
\begin{equation}\label{cpx-lie}\tag{\thesection.\arabic{equation}}
\begin{aligned}
& \gsl(\infty,\C) = \varinjlim \gsl(n;\C),\\
& \gso(\infty,\C) = \varinjlim \gso(2n;\C) = \varinjlim \gso(2n+1;\C), \\
& \gsp(\infty,\C) = \varinjlim \gsp(n;\C),
\end{aligned}
\end{equation}
where the direct systems are given by the inclusions of the form
$A \mapsto (\begin{smallmatrix} A & 0 \\ 0 & 0 \end{smallmatrix} )$.
We will also consider the locally reductive algebra
$\ggl(\infty;\C) = \varinjlim \ggl(n;\C)$ along with $\gsl(\infty;\C)$.
The direct limit process of (\ref{cpx-lie}) defines the universal enveloping
algebras
\addtocounter{equation}{1}
\begin{equation}\label{univ-cpx-lie}\tag{\thesection.\arabic{equation}}
\begin{aligned}
& \cU(\gsl(\infty,\C)) = \varinjlim \cU(\gsl(n;\C)) \text{ and }
	\cU(\ggl(\infty,\C)) = \varinjlim \cU(\ggl(n;\C)), \\
& \cU(\gso(\infty,\C)) = \varinjlim \cU(\gso(2n;\C)) = \varinjlim \cU(\gso(2n+1;\C)),        
        \text{ and }\\
& \cU(\gsp(\infty,\C)) = \varinjlim \cU(\gsp(n;\C)),
\end{aligned}
\end{equation}
\smallskip

Of course each of these Lie algebras $\gg_\C$ has the underlying structure of
a real Lie algebra.  Besides that, their real forms are as follows (\cite{B},
\cite{BS}, \cite{DCPW}).
\smallskip

If  $\gg_\C = \gsl(\infty;\C)$,  then $\gg_\R$ is one of 
$\gsl(\infty;\R) = \varinjlim \gsl(n;\R)$, the real
special linear Lie algebra;
$\gsl(\infty;\H) = \varinjlim \gsl(n;\H)$, the quaternionic
special linear Lie algebra, given by
$\gsl(n;\H) := \ggl(n;\H) \cap \gsl(2n;\C)$;
$\gs\gu(p,\infty) = \varinjlim \gs\gu(p,n)$, the complex special
unitary Lie algebra of real rank $p$; or
$\gs\gu(\infty,\infty) = \varinjlim \gs\gu(p,q)$, complex
special unitary algebra of infinite real rank.
\smallskip

If  $\gg_\C = \gso(\infty;\C)$,  then $\gg_\R$ is one of 
$\gso(p,\infty) = \varinjlim \gso(p,n)$, the real orthogonal
Lie algebra of finite real rank $p$;
$\gso(\infty,\infty) = \varinjlim \gso(p,q)$, the real
orthogonal Lie algebra of infinite real rank; or
$\gso^*(2\infty) = \varinjlim \gso^*(2n)$
\smallskip

If  $\gg_\C = \gsp(\infty;\C)$,  then $\gg_\R$ is one of 
$\gsp(\infty;\R) = \varinjlim \gsp(n;\R)$, the real
symplectic Lie algebra;
$\gsp(p,\infty) = \varinjlim \gsp(p,n)$, the quaternionic unitary
Lie algebra of real rank $p$; or
$\gsp(\infty,\infty) = \varinjlim \gsp(p,q)$, quaternionic
unitary Lie algebra of infinite real rank.
\smallskip

If  $\gg_\C = \ggl(\infty;\C)$,  then $\gg_\R$ is one 
$\ggl(\infty;\R) = \varinjlim \ggl(n;\R)$, the real
general linear Lie algebra;
$\ggl(\infty;\H) = \varinjlim \ggl(n;\H)$, the quaternionic
general linear Lie algebra;
$\gu(p,\infty) = \varinjlim \gu(p,n)$, the complex unitary
Lie algebra of finite real rank $p$; or
$\gu(\infty,\infty) = \varinjlim \gu(p,q)$, the complex
unitary Lie algebra of infinite real rank.
\smallskip

As in (\ref{univ-cpx-lie}), given one of these Lie algebras 
$\gg_\R = \varinjlim \gg_{n,\R}$ we have the universal enveloping algebra.
We will need it for the induced representation process.  As in the
finite dimensional case, we use the universal enveloping algebra of the
complexification.  Thus when we write $\cU(\gg_\R)$ it is understood that 
we mean $\cU(\gg_\C)$.  The reason for this is that we will want our
representations of real Lie groups to be representations on complex
vector spaces.
\smallskip

The corresponding Lie groups are exactly what one expects.  First the
complex groups, viewed either as complex groups or as real groups,
\addtocounter{equation}{1}
\begin{equation}\label{cpl_gps}\tag{\thesection.\arabic{equation}}
\begin{aligned}
& SL(\infty;\C) = \varinjlim SL(n;\C) \text{ and } GL(\infty;\C) = 
     \varinjlim GL(n;\C), \\
& SO(\infty;\C) = \varinjlim SO(n;\C) = \varinjlim SO(2n;\C)
     = \varinjlim SO(2n+1;\C), \\
& Sp(\infty;\C) = \varinjlim Sp(n;\C).
\end{aligned}
\end{equation}
The real forms of the complex special and general linear groups 
$SL(\infty;\C)$ and $GL(\infty;\C)$ are
\addtocounter{equation}{1}
\begin{equation}\label{typeA}\tag{\thesection.\arabic{equation}}
\begin{aligned}
& SL(\infty;\R) \text{ and } GL(\infty;\R): 
     \text{ real special/general linear groups, } \\
& SL(\infty;\H): \text{ quaternionic special linear group, } \\
& (S)U(p,\infty): \text{ (special) unitary groups
     of real rank } p < \infty, \\
& (S)U(\infty,\infty): \text{ (special) 
     unitary groups of infinite real rank.}
\end{aligned}
\end{equation}
The real forms of the complex orthogonal and spin groups 
$SO(\infty;\C)$ and $Spin(\infty;\C)$ are
\addtocounter{equation}{1}
\begin{equation}\label{typeBD}\tag{\thesection.\arabic{equation}}
\begin{aligned}
& SO(p,\infty) \text{, } Spin(p;\infty): \text{ real orth./spin 
     groups of real rank } p < \infty,  \\
& SO(\infty,\infty) \text{, } Spin(\infty,\infty): \text{ real
     orthog./spin groups of real rank } \infty, \\
& SO^*(2\infty) = \varinjlim SO^*(2n), \text{ which doesn't have a 
	standard name}
\end{aligned}
\end{equation}
Here 
$$
SO^*(2n) = \{g \in SL(n;\H) \mid g \text{ preserves the form } 
              \kappa(x,y) := \sum x^\ell i \bar y^\ell = {^tx} i \bar y\}.
$$
Alternatively, $SO^*(2n) = SO(2n;\C) \cap U(n,n)$ with  
$$
SO(2n;C) \text{ defined by }
   (u,v) = \sum (u_j v_{n+jr} + v_{n+j}w_j).
$$
Finally, the real forms of the complex symplectic group $Sp(\infty;\C)$ are
\addtocounter{equation}{1}
\begin{equation}\label{typeC}\tag{\thesection.\arabic{equation}}
\begin{aligned}
& Sp(\infty;\R): \text{ real symplectic group,} \\
& Sp(p,\infty): \text{ quaternion unitary group of real rank } p < \infty, \text{ and }\\
& Sp(\infty,\infty): \text{ quaternion unitary group of infinite real rank.}
\end{aligned}
\end{equation}

\section{Complex Parabolic Subalgebras}\label{sec2}
\setcounter{equation}{0}
In this section we recall the structure of parabolic subalgebras of
$\ggl(\infty;\C)$, $\gsl(\infty);\C)$, $\gso(\infty;\C)$ and $\gsp(\infty;\C)$.
We follow Dan--Cohen and Penkov (\cite{DC1}, \cite{DC2}).
\smallskip

We first describe $\gg_\C$ in terms of linear spaces.  Let $V$ and $W$ be 
nondegenerately paired countably infinite dimensional complex vector spaces.   
Then
$\ggl(\infty,\C) = \ggl(V,W) := V\otimes W$ consists of all finite linear
combinations of the rank $1$ operators 
$v\otimes w: x \mapsto \langle w, x\rangle v$.  In
the usual ordered basis of $V = \C^\infty$, parameterized by the positive 
integers, and with the dual basis of $W = V^* = (\C^\infty)^*$, we can
view  $\ggl(\infty,\C)$ can be viewed as infinite matrices with only
finitely many nonzero entries.  However $V$ has more exotic ordered bases, for
example parameterized by the rational numbers, where the matrix picture is
not intuitive.
\smallskip

The rank 1 operator $v\otimes w$ has a well defined trace, so trace is
well defined on $\ggl(\infty,\C)$.  Then $\gsl(\infty,\C)$ is the
traceless part, $\{g \in \ggl(\infty;\C) \mid \trace g = 0\}$.  
\smallskip

In the orthogonal case we can take $V = W$ using the symmetric bilinear
form that defines $\gso(\infty;\C)$.  Then
$$
\gso(\infty;\C) = \gso(V,V) = \Lambda\ggl(\infty;\C) \text{ where }
	\Lambda(v\otimes v') = v\otimes v' - v'\otimes v.
$$
In other words, in an ordered orthonormal basis of $V = \C^\infty$ 
parameterized by the positive integers, $\gso(\infty;\C)$ can be viewed
as the infinite antisymmetric matrices with only finitely many nonzero entries.
\smallskip

Similarly, in the symplectic case we can take $V = W$ using the antisymmetric
bilinear form that defines $\gsp(\infty;\C)$, and then
$$
\gsp(\infty;\C) = \gsp(V,V) = S\ggl(\infty;\C) \text{ where }
        S(v\otimes v') = v\otimes v' + v'\otimes v.
$$
In an appropriate ordered basis of $V = \C^\infty$ parameterized
by the positive integers, $\gsp(\infty;\C)$ can be viewed
as the infinite symmetric matrices with only finitely many nonzero entries.
\smallskip

In the finite dimensional setting, Borel subalgebra means a maximal solvable
subalgebra, and parabolic subalgebra means one that contains a Borel.  It is
the same here except that one must use {\em locally solvable} to avoid the
prospect of an infinite derived series.

\begin{definition}
{\rm A {\em Borel subalgebra} of $\gg_\C$ is a maximal 
locally solvable subalgebra.  A {\em parabolic subalgebra} of $\gg_\C$ 
is a subalgebra that contains a Borel subalgebra.}
\hfill $\diamondsuit$
\end{definition}
\smallskip

In the finite dimensional setting a parabolic subalgebra is the stabilizer
of an appropriate nested sequence of subspaces (possibly with an orientation
condition in the orthogonal group case).  In the infinite dimensional setting
here,  one must be very careful as
to which nested sequences of subspaces are appropriate.  If $F$ is a subspace
of $V$ then $F^\perp$ denotes its annihilator in $W$. Similarly if ${'F}$
is a subspace of $W$ the ${'F}^\perp$ denotes its annihilator in $V$.  
We say that $F$ (resp. ${'F}$) is {\em closed} if $F = F^{\perp\perp}$
(resp. ${'F} = {'F}^{\perp\perp}$).  This is the closure relation in the
Mackey topology \cite{M}, i.e. the weak topology for the functionals on $V$ from
$W$ and on $W$ from $V$.
\smallskip

In order to avoid repeating the following definitions later on, we make them 
in somewhat greater generality than we need just now.

\begin{definition} \label{genflag}
{\rm Let $V$ and $W$ be countable dimensional vector spaces over a real
division ring
$\D = \R, \C \text{ or } \H$, with a nondegenerate bilinear pairing
$\langle \cdot , \cdot \rangle : V \times W \rightarrow \D$.  
A {\em chain} or {\em $\D$--chain} in $V$ (resp. $W$) is a set of 
$\D$--subspaces totally ordered by inclusion.  An {\em generalized $\D$--flag}
in $V$ (resp. $W$) is an $\D$--chain such that each subspace has an 
immediate predecessor or an immediate successor in the inclusion ordering, 
and every nonzero vector of $V$ (or $W$) is caught between an immediate 
predecessor successor (IPS) pair.  An generalized $\D$--flag $\cF$ in $V$
(resp. ${'\cF}$ in $W$) is {\em semiclosed} if 
$F \in \cF$ with $F \ne F^{\perp\perp}$ implies $\{F,F^{\perp\perp}\}$
is an IPS pair (resp. $'F \in {'\cF}$ with $'F \ne 'F^{\perp\perp}$ 
implies $\{'F,'F^{\perp\perp}\}$ is an IPS pair).}
\hfill $\diamondsuit$
\end{definition}

\begin{definition}\label{taut}
{\rm Let $\D$, $V$ and $W$ be as above.
Generalized $\D$--flags $\cF$ in $V$ and ${'\cF}$ in $W$ form a 
{\em taut couple} when
(i) if $F \in \cF$ then $F^\perp$ is
invariant by the $\ggl$--stabilizer of $'\cF$ and
(ii) if $'F \in {'\cF}$ then its annihilator
$'F^\perp$ is invariant by the $\ggl$--stabilizer of $\cF$.} 
\hfill $\diamondsuit$
\end{definition}

In the
$\gso$ and $\gsp$ cases one can use the associated bilinear form to
identify $V$ with $W$ and $\cF$ with ${'\cF}$.
Then we speak of a generalized flag $\cF$ in $V$ as {\em self--taut}.
If $\cF$ is a self--taut generalized flag in $V$ then \cite{DCPW}
every $F \in \cF$ is either isotropic or co--isotropic.
\smallskip

{\bf \begin{example}\label{lim-ord}
{\rm Here is a quick peek at an obvious phenomenon introduced by infinite
dimensionality.  Enumerate bases of $V = \C^\infty$ and 
$W = \C^\infty$ by $(\Z^+)^2$, say $\{v_i = v_{i_1,i_2}\}$ and
$\{w_j = w_{j_1,j_2}\}$, with $\langle v_i,w_j\rangle = 1$ if both
$i_1 = j_1$ and $i_2 = j_2$ and $\langle v_i,w_j\rangle = 0$ otherwise.  
Define $\cF = \{F_i\}$ 
ordered by inclusion where one builds up bases of the $F_i$ first with 
the $v_{i_1,1},\ i_1 \geqq 1$ and then the $v_{i_1,2},\ i_1 \geqq 1$ and 
then the $v_{i_1,3},\ i_1 \geqq 1$, and so on. One does the same for
$'\cF$ using the $\{w_j\}$.  Now these form a taut couple of semiclosed 
generalized flags
whose ordering involves an infinite number of limit ordinals.  
That makes it hard to use matrix methods.} \hfill $\diamondsuit$
\end{example}
}

\begin{theorem} \label{self-norm-cpx-parab}
The self--normalizing parabolic subalgebras of the Lie algebras $\gsl(V,W)$ and
$\ggl(V,W)$ are the normalizers of taut couples of semiclosed generalized flags
in $V$ and $W$, and this is a one to one correspondence.  
The self--normalizing parabolic subalgebras of $\gsp(V)$
are the normalizers of self--taut semiclosed generalized flags in $V$, and 
this too is a one to one correspondence.
\end{theorem}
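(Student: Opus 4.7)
The strategy is to leverage the Dan--Cohen and Penkov classification of Borel subalgebras, which serves as the paper's starting point: every Borel subalgebra of $\gsl(V,W)$ or $\ggl(V,W)$ is the stabilizer of a taut couple of maximal closed generalized flags, and this assignment is itself bijective. Since every parabolic $\gp$ contains a Borel $\gb$, a Borel-stabilized taut couple $(\cE,{'\cE})$ is available, and the task is to produce from $\gp$ a coarsening of $(\cE,{'\cE})$ that is a taut couple of semiclosed generalized flags whose normalizer is exactly $\gp$, and conversely to recognize every such normalizer as a self-normalizing parabolic.

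Given a self-normalizing parabolic $\gp$, I would define $\cF$ to consist of those $E \in \cE$ that are $\gp$--invariant, and dually ${'\cF} \subseteq {'\cE}$. The $\gp$-stability and the taut-couple condition (Definition 2.4) are then essentially built in: each member of $\cF$ is preserved by the $\ggl$--stabilizer of ${'\cF}$ because that stabilizer contains $\gp$, and symmetrically for ${'\cF}$. Showing that $\cF$ is a generalized flag — every nonzero $v \in V$ caught in an IPS pair — amounts to checking that when a maximal block of consecutive members of $\cE$ collapses (because $\gp$ fails to stabilize the intermediate subspaces), the block endpoints remain in $\cF$ and form an IPS pair there; this follows because $\gp$ acts on the collapsed quotient as a full general-linear-type algebra, with no intermediate invariant subspaces to insert. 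For semiclosedness, I would argue that if some $F \in \cF$ satisfied $F \neq F^{\perp\perp}$ without $\{F,F^{\perp\perp}\}$ being an IPS pair, then $F^{\perp\perp}$ would nevertheless be $\gp$-invariant (since $\gp$ preserves ${'\cF}$ and hence the closure induced by the pairing), contradicting the maximality of $\cF$ as the subchain of $\gp$-invariant elements of $\cE$.

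For the converse, given a taut couple $(\cF,{'\cF})$ of semiclosed generalized flags, set $\gp := N_{\gg_\C}(\cF,{'\cF})$. I would show $\gp$ is parabolic by refining $(\cF,{'\cF})$ to a maximal closed taut couple $(\cE,{'\cE})$; the semiclosed hypothesis is exactly what allows one to insert each $F^{\perp\perp}$ adjacent to $F$ without disturbing the generalized-flag structure, making the refinement possible, and the stabilizer of $(\cE,{'\cE})$ is a Borel inside $\gp$. For self-normalization, if $x \in \gg_\C$ normalizes $\gp$ then using that $\gp$ is rich in rank-one operators $v \otimes w$ with $v \in F$ and $w$ annihilating a large piece of ${'\cF}$, a commutator argument should force $xF \subseteq F^{\perp\perp}$ for every $F \in \cF$; the semiclosed hypothesis then rules out the strictly intermediate case, giving $xF \subseteq F$ and hence $x \in \gp$. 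Finally, applying the first construction to $\gp$ recovers the original $(\cF,{'\cF})$, establishing the bijection.

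The main obstacle I anticipate is this self-normalization step in the presence of chains whose order type involves limit ordinals (as in Example 2.6); there no matrix calculation is available, and one must reason purely with the bilinear pairing and the Mackey-topology closures to control where $x$ can send a given $F$. The $\gsp(V)$ statement then follows by descent: the symplectic form realizes $\gsp(V)$ as the fixed-point subalgebra of an involution $\sigma$ on $\ggl(V,V)$, self--taut semiclosed flags $\cF$ in $V$ correspond exactly to $\sigma$-symmetric taut couples $(\cF,\cF)$, and taking normalizers commutes with passage to $\sigma$-fixed points, so both the parabolic property and self-normalization descend from the $\ggl$ case. The compatibility with the isotropic/co-isotropic dichotomy for members of a self-taut flag comes free from the pairing.
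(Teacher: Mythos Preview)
The paper does not prove this theorem. Section~\ref{sec2} opens by saying ``we recall the structure of parabolic subalgebras \dots\ We follow Dan--Cohen and Penkov (\cite{DC1}, \cite{DC2})'', and Theorem~\ref{self-norm-cpx-parab} is then stated without proof as a result quoted from those references. There is therefore no proof in the paper against which to compare your proposal.

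Your outline is a reasonable sketch of the argument one would expect to find in \cite{DC2}, and the overall architecture (start from the Borel classification, coarsen to the $\gp$--invariant subchain, verify the semiclosed and taut properties, and run the converse via refinement to a maximal flag) is sound. Two places would need tightening before it could stand as a proof. First, the assertion that between two consecutive $\gp$--invariant members of $\cE$ ``$\gp$ acts on the collapsed quotient as a full general-linear-type algebra, with no intermediate invariant subspaces'' is doing all the work in showing that your $\cF$ is a generalized flag, and it is not automatic: you need self--normalization of $\gp$ here, since a non--self--normalizing parabolic (cf.\ Example~\ref{sl-in-gl} and Theorem~\ref{gen-cpx-parab}) can cut out a proper subalgebra of the relevant $\ggl$ on such a quotient via an infinite trace condition, yet still have no further invariant subspaces. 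You should make explicit how self--normalization forces the image in each successive quotient to be all of $\ggl$. Second, the $\gsp$ descent is plausible but the step ``taking normalizers commutes with passage to $\sigma$--fixed points'' is not a formality: one must check that a $\sigma$--fixed element normalizing the $\sigma$--fixed stabilizer actually normalizes the full $\ggl$--stabilizer, which again uses the abundance of rank--one operators adapted to the flag.
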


\begin{theorem} \label{self-norm-cpx-parab-so}
The self--normalizing parabolic subalgebras of $\gso(V)$ are the 
normalizers of self--taut semiclosed generalized flags $\cF$ in $V$, and
there are two possibilities:
\begin{enumerate}
\item the flag $\cF$ is uniquely determined by the parabolic, or
\item there are exactly three self--taut generalized flags with the same 
stabilizer as $\cF$.
\end{enumerate}
The latter case occurs precisely when there exists an isotropic subspace $L \in \cF$ with  $\dim_\C L^\perp / L = 2$.  The three flags with the same stabilizer are then
\begin{itemize}
\item[] $\{F \in \cF \mid F \subset L \textrm{ or } L^\perp \subset F \}$
\item[] $\{F \in \cF \mid F \subset L \textrm{ or } L^\perp \subset F \} \cup M_1$
\item[] $\{F \in \cF \mid F \subset L \textrm{ or } L^\perp \subset F \} \cup M_2$
\end{itemize}
where $M_1$ and $M_2$ are the two maximal isotropic subspaces containing $L$.
\end{theorem}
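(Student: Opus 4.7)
The plan is to handle the two assertions in turn: first that every self-normalizing parabolic $\gp \subseteq \gso(V)$ is the normalizer of some self-taut semiclosed generalized flag $\cF$, and then that the ambiguity in recovering $\cF$ from $\gp$ is exactly the dichotomy stated. The first assertion is parallel to Theorem~\ref{self-norm-cpx-parab} for $\gsl$ and $\gsp$, and I would import its proof strategy from Dan--Cohen--Penkov \cite{DC1,DC2}: the lattice of $\gp$-invariant $\C$-subspaces of $V$ is rich enough (because $\gp$ contains a Borel subalgebra) to extract a self-taut semiclosed generalized flag whose normalizer in $\gso(V)$ recovers $\gp$, with the self-taut condition coming from the fact that $\gp$ preserves the defining symmetric bilinear form.

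For the dichotomy, suppose $\cF$ and $\cF'$ are two distinct self-taut semiclosed generalized flags with the same normalizer $\gp$, and pick $F \in \cF \setminus \cF'$. By the semiclosed generalized-flag structure of $\cF'$, there is a unique IPS pair $(F_1,F_2)$ of $\cF'$ with $F_1 \subsetneq F \subsetneq F_2$; the self-taut property, together with a short reduction argument, forces $F_2 = F_1^\perp$ with $L := F_1$ isotropic. Consider the quotient $Q := L^\perp/L$, which inherits a nondegenerate symmetric bilinear form, and let $\bar\gp \subseteq \gso(Q)$ be the image of $\gp$. Since $\cF'$ has no subspace strictly between $L$ and $L^\perp$, the Levi part of $\gp$ acts on $Q$ without any further restriction, so $\bar\gp = \gso(Q)$; consequently $\bar F := F/L$ is a proper nonzero $\gso(Q)$-invariant subspace of $Q$.

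Now I use that the natural action of $\gso(Q)$ on $Q$ is irreducible whenever $\dim_\C Q \geq 3$, whether $Q$ is finite- or infinite-dimensional (by the direct-limit argument from the finite-dimensional case). This excludes any such $\bar F$, forcing $\dim_\C Q = 2$. In that case $\gso(Q)$ is a one-dimensional torus fixing exactly the two isotropic lines of $Q$, which pull back to the maximal isotropic subspaces $M_1, M_2 \subset V$ containing $L$. Hence $F \in \{M_1,M_2\}$, and setting $\cF_0 = \{F' \in \cF \mid F' \subseteq L \textrm{ or } L^\perp \subseteq F'\}$ one sees that the only self-taut semiclosed generalized flags with normalizer $\gp$ are $\cF_0$, $\cF_0 \cup \{M_1\}$, and $\cF_0 \cup \{M_2\}$: each is self-taut because $M_i^\perp = M_i$ is closed, and all three share the normalizer $\gp$ because the induced $\gso(2)$-action fixes both $M_1/L$ and $M_2/L$ pointwise.

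The main obstacle will be the control of the induced subalgebra $\bar\gp$ on the possibly infinite-dimensional quadratic quotient $Q = L^\perp/L$, and the verification that it coincides with the full $\gso(Q)$. This requires combining the defining property of the normalizer of $\cF'$ with the absence of any intermediate $\cF'$-subspace in $Q$ to rule out ``hidden'' invariance that could arise from the rich parabolic structure of $\gso(\infty)$. Once $\bar\gp = \gso(Q)$ is secured, the irreducibility of the natural $\gso(Q)$-representation for $\dim Q \neq 2$ makes the remainder of the argument a direct type-D bookkeeping.
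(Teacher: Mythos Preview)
The paper does not give its own proof of this theorem: it is stated in the section on complex parabolics as a result quoted from Dan--Cohen and Penkov \cite{DC1,DC2}, so there is no in--paper argument to compare against.  Your proposal is therefore an independent attempt at the cited result.

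Your overall strategy is the natural one and matches the finite--dimensional type--$D$ phenomenon, but two of the steps you pass over quickly are where the real content lies.  First, the claim that the offending $F \in \cF \setminus \cF'$ is trapped between an IPS pair $(F_1,F_2)$ of $\cF'$ with $F_1$ isotropic and $F_2 = F_1^\perp$ is not a ``short reduction'': you need the fact (stated just before Example~\ref{lim-ord}) that every member of a self--taut semiclosed generalized flag is isotropic or coisotropic, and then a case analysis on the isotropic/coisotropic type of $F$, $F_1$, $F_2$ to rule out the other configurations.  In particular you must exclude the possibility that $F$ lies between two isotropic members of $\cF'$ (or dually two coisotropic ones), which would contradict $\gp$ being the full normalizer of $\cF'$ for a different reason than the one you invoke.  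Second, the surjectivity $\bar\gp = \gso(Q)$ is exactly the Levi--component computation of Theorem~\ref{sosplevi}: the normalizer of $\cF'$ contains the block $\gso(Z)$ acting on a complement $Z$ to $L$ in $L^\perp$, and this block maps onto $\gso(Q)$.  You should cite or reproduce that, rather than leave it as an obstacle.

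Finally, your argument shows that any discrepancy between $\cF$ and $\cF'$ is localized at a single isotropic $L$ with $\dim_\C L^\perp/L = 2$, but you should check explicitly that at most one such $L$ can occur in a given self--taut semiclosed generalized flag (two such would force incompatible codimension constraints), so that the count of flags with normalizer $\gp$ is exactly three rather than more.
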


{\bf \begin{example}\label{sl-in-gl}
{\rm Before proceeding we indicate an example which shows that not all parabolics
are equal to their normalizers.  Enumerate bases of $V = \C^\infty$ and 
$W = \C^\infty$ by rational numbers with pairing 

\centerline{$\langle v_q , w_r \rangle = 1 \text{ if } q > r, \, \, \, \,
= 0 \text{ if } q \leqq r$}

\noindent Then Span$\{v_q \otimes w_r \mid q \leqq r \}$ is a Borel 
subalgebra of $\ggl(\infty)$ contained in $\gsl(\infty)$.  This shows that 
$\gsl(\infty)$ is parabolic in $\ggl(\infty)$.} \hfill $\diamondsuit$
\end{example}
}

One pinpoints this situation as follows.  If $\gp$ is a (real or
complex) subalgebra of $\gg_\C$
and $\gq$ is a quotient algebra isomorphic to $\ggl(\infty;\C)$, say
with quotient map $f : \gp \to \gq$, then we refer to the composition
$trace\circ f : \gp \to \C$ as an {\em infinite trace} on $\gg_\C$.  If
$\{f_i\}$ is a finite set of infinite traces on $\gg_\C$ and $\{c_i\}$
are complex numbers, then we refer to the condition $\sum c_if_i = 0$ as
an {\em infinite trace condition} on $\gp$.  
\smallskip

These quotients can exist.  In Example \ref{lim-ord}
we can take $V_a$ to be the span of the $v_{i_1,a}$
and $W_a$ the span of the the dual $w_{i_1,a}$ for $a = 1, 2, ...$ and then
the normalizer of the taut couple $(\cF,{'\cF})$ has infinitely many
quotients $\ggl(V_a,W_a)$.

\begin{theorem} \label{gen-cpx-parab}
The parabolic subalgebras  $\gp$ in $\gg_\C$ are the algebras
obtained from self normalizing parabolics $\widetilde{\gp}$ by imposing 
infinite trace conditions.
\end{theorem}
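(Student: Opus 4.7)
The plan is to establish both directions of the equivalence: every parabolic $\gp$ arises by imposing infinite trace conditions on a self--normalizing parabolic, and conversely every such construction yields a parabolic.

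\textbf{Forward direction.} Given $\gp$ parabolic in $\gg_\C$, set $\widetilde{\gp} := N_{\gg_\C}(\gp)$. Since $\gp$ contains a Borel $\gb$, so does $\widetilde{\gp}\supset \gp$, hence $\widetilde{\gp}$ is parabolic; and the normalizer of any subalgebra is self--normalizing, so $\widetilde{\gp}$ is a self--normalizing parabolic, described by Theorems \ref{self-norm-cpx-parab} or \ref{self-norm-cpx-parab-so} as the stabilizer of a (self--)taut couple of semiclosed generalized flags. Because $\gp$ is an ideal of $\widetilde{\gp}$, the quotient $\widetilde{\gp}/\gp$ is a Lie algebra. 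I would then show it is abelian by establishing the sharper statement $[\widetilde{\gp},\widetilde{\gp}] \subseteq \gp$: using the flag--stabilizer description, $\gp$ and $\widetilde{\gp}$ have the same locally nilpotent radical, and they differ only in the ``block--diagonal'' (Levi) part, on which $\widetilde{\gp}/[\widetilde{\gp},\widetilde{\gp}]$ is finite dimensional.

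\textbf{Identifying the quotient with traces.} The second main step is to identify $\widetilde{\gp}/[\widetilde{\gp},\widetilde{\gp}]$ with a finite dimensional space whose natural basis is given by the infinite trace functionals. From the flag--stabilizer structure, the Levi of $\widetilde{\gp}$ is a direct sum of locally finite classical simple factors of type $\gsl$, $\gso$, $\gsp$ together with $\ggl$--type factors $\ggl(V_a,W_a)$, one for each IPS pair of the flag on which the induced subquotient is infinite dimensional. All the locally simple pieces are perfect and so kill any character; the only remaining one--dimensional quotients of $\widetilde{\gp}$ are therefore the traces of the $\ggl$--factors. Hence any subalgebra $\gp$ with $[\widetilde{\gp},\widetilde{\gp}]\subseteq \gp\subseteq \widetilde{\gp}$ is cut out by finitely many linear combinations $\sum c_i f_i = 0$ of such infinite traces.

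\textbf{Reverse direction.} Let $\widetilde{\gp}$ be a self--normalizing parabolic and let $\gp$ be the subalgebra of $\widetilde{\gp}$ cut out by a finite set of infinite trace conditions. Each trace is a Lie algebra homomorphism to $\C$, so $\gp$ is a subalgebra of finite codimension in $\widetilde{\gp}$. It remains to exhibit a Borel of $\gg_\C$ inside $\gp$. Using Example \ref{sl-in-gl} as a template, within each $\ggl(V_a,W_a)$--quotient of $\widetilde{\gp}$ one can choose a Borel of that quotient that is entirely traceless (the span of $v_q\otimes w_r$ with $q\leqq r$ in the rational enumeration); lifting these choices compatibly across the different quotients produces a Borel of $\widetilde{\gp}$ on which every imposed trace condition vanishes, so that this Borel lies in $\gp$. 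Thus $\gp$ is parabolic.

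\textbf{Main obstacle.} The delicate part is the ``identification'' step in the forward direction: one must show that no subalgebra of finite codimension in $\widetilde{\gp}$ is ``exotic,'' i.e.\ not of trace form. This requires pinning down the abelianization of $\widetilde{\gp}$, which in turn requires a precise grasp of the Levi decomposition of self--normalizing parabolics, including bookkeeping for flags with limit ordinals of the type in Example \ref{lim-ord}. The perfectness of the classical simple factors and the fact that the trace is (up to scalar) the only character of $\ggl(\infty;\C)$ that factors through the direct limit are the two analytic inputs that make the argument go through.
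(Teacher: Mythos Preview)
The paper does not actually prove this theorem. Section \ref{sec2} is explicitly a survey of results of Dan--Cohen and Penkov \cite{DC1}, \cite{DC2}, and Theorem \ref{gen-cpx-parab} is quoted from that work without argument. So there is no proof in the paper to compare your proposal against.

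That said, your sketch contains a genuine gap. You assert that $\widetilde{\gp}/[\widetilde{\gp},\widetilde{\gp}]$ is finite dimensional, and later that $\gp$ has finite codimension in $\widetilde{\gp}$. Neither need hold. As the paragraph just after Example \ref{sl-in-gl} points out, in the situation of Example \ref{lim-ord} the normalizer of the taut couple has \emph{infinitely many} quotients $\ggl(V_a,W_a)$, and correspondingly the abelianization of $\widetilde{\gp}$ is infinite dimensional. One may impose infinitely many independent infinite trace conditions; the phrase ``imposing infinite trace conditions'' in the theorem allows an arbitrary collection of such conditions, each of which is individually a finite linear combination of infinite traces. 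Your forward direction, as written, only handles the finite--codimension case and so does not cover all parabolics.

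A smaller point: your description of the Levi of $\widetilde{\gp}$ is off. By Theorems \ref{glpar} and \ref{sosplevi} the Levi component is a direct sum of standard $\gsl(X_j,Y_j)$'s (with at most one $\gso$ or $\gsp$ piece), not of $\ggl$'s. The $\ggl(\infty;\C)$ quotients that carry the infinite traces are quotients of $\widetilde{\gp}$ itself (block--diagonal subquotients of the flag), not summands of the Levi. Sorting this out carefully is exactly the ``precise grasp of the Levi decomposition'' you flag as the main obstacle, and it has to be done correctly---and in a way that accommodates infinitely many $\ggl$ quotients---for the argument to go through.
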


As a general principle one tries to be explicit by constructing representations
that are as close to irreducible as feasible.  For this reason we will be
constructing principal series representations by inducing from parabolic 
subgroups that are minimal among the
self--normalizing parabolic subgroups.  Still, one should be aware of the
phenomenon of Example \ref{sl-in-gl} and Theorem \ref{gen-cpx-parab}.

\section{Real Parabolic Subalgebras and Subgroups}\label{sec3}
\setcounter{equation}{0}
In this section we discuss the structure of parabolic subalgebras of
real forms of the classical $\gsl(\infty,\C)$, $\gso(\infty,\C)$,
$\gsp(\infty,\C)$ and $\ggl(\infty,\C)$.  In this section $\gg_\C$ will
always be one of them and $G_\C$ will be the corresponding connected complex
Lie group.  Also, $\gg_\R$ will be a real form of $\gg_\C$, and
$G_\R$ will be the corresponding connected real subgroup of $G_\C$.
\smallskip

\begin{definition}\label{defrealp}
Let $\gg_\R$ be a real form of $\gg_\C$.  Then a subalgebra 
$\gp_\R \subset \gg_\R$ is a {\em parabolic subalgebra} if
its complexification $\gp_\C$ is a parabolic subalgebra of $\gg_\C$.
\hfill $\diamondsuit$
\end{definition}

When $\gg_\R$ has two inequivalent defining representations, in
other words when 
$$
\gg_\R = \gsl(\infty;\R),\,\, \ggl(\infty;\R),\,\, 
  \gsu(*,\infty),\,\, \gu(*,\infty),\,\, 
  \text{ or } \,\,\gsl(\infty;\H)
$$
we denote them by $V_\R$ and $W_\R$, and when $\gg_\R$ has only one 
defining representation, in other words when
$$
\gg_\R = \gso(*,\infty),\,\, \gsp(*,\infty),\,\,
   \gsp(\infty;\R),\,\, \text{ or } \,\,\gso^*(2\infty) 
   \text{ as quaternion matrices,}
$$
we denote it by $V_\R$.  The commuting algebra of $\gg_\R$ on $V_\R$ is a
real division algebra $\D$.  The main result of \cite{DCPW} is

\begin{theorem} \label{real-parab}
Suppose that $\gg_\R$ has two inequivalent defining representations.  Then
a subalgebra of $\gg_\R$ (resp. subgroup of $G_\R$) is
parabolic if and only if it is defined by infinite trace conditions
(resp. infinite determinant conditions) on the
$\gg_\R$--stabilizer (resp. $G_\R$--stabilizer) of
a taut couple of generalized $\D$--flags $\cF$ in $V_\R$ and $'\cF$ in $W_\R$.

Suppose that $\gg_\R$ has only one defining representation.
A subalgebra of $\gg_\R$ (resp. subgroup) of $G_\R$ is
parabolic if and only if it is defined by infinite trace conditions
(resp. infinite determinant conditions) on the
$\gg_\R$--stabilizer (resp. $G_\R$--stabilizer) of
a self--taut generalized $\D$--flag $\cF$ in $V_\R$.
\end{theorem}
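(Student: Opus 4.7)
The plan is to reduce everything to the complex classification of Section \ref{sec2} by complexification, and then recover the real data by Galois descent. Let $\sigma: \gg_\C \to \gg_\C$ be the antilinear involution with $\gg_\R = \gg_\C^\sigma$; when $V_\R, W_\R$ are genuinely real ($\D = \R$) or quaternionic ($\D = \H$), write $V_\C = V_\R \otimes_\R \C$ and $W_\C = W_\R \otimes_\R \C$ (choosing $i \in \H$ in the quaternionic case), while when $\D = \C$ and $\gg_\R$ is of type $\gsu(*,\infty)$ or $\gu(*,\infty)$, the Hermitian form identifies $V_\C$ and $W_\C$ antilinearly with $V_\R$ and $W_\R$. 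In each case $\sigma$ acts semilinearly on the defining representations. By Definition \ref{defrealp} the statement $\gp_\R \subset \gg_\R$ is parabolic is equivalent to $\gp_\C = \gp_\R \otimes_\R \C$ being parabolic in $\gg_\C$, and $\gp_\R = \gp_\C \cap \gg_\R$. So the task is to match $\sigma$-invariant complex flag data, as classified by Theorems \ref{self-norm-cpx-parab}, \ref{self-norm-cpx-parab-so}, and \ref{gen-cpx-parab}, with real $\D$-flag data.

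For sufficiency, suppose $\gp_\R$ is cut out of the $\gg_\R$-stabilizer of a (self-)taut couple of generalized $\D$-flags by infinite trace conditions. Extending scalars subspace-by-subspace produces a (self-)taut couple of semiclosed generalized flags in the complex defining representations, its complex stabilizer is the complexification of the real one, and the infinite trace conditions carry over verbatim. Theorems \ref{self-norm-cpx-parab}, \ref{self-norm-cpx-parab-so}, and \ref{gen-cpx-parab} then assert that $\gp_\C$ is parabolic, so $\gp_\R$ is parabolic. Exponentiating trace conditions to determinant conditions gives the group statement, since the analytic subgroup corresponding to a $\sigma$-invariant complex subalgebra is the real points of the corresponding complex subgroup.

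For necessity, start with a parabolic $\gp_\R$ and apply the complex classification to $\gp_\C$, obtaining either a taut couple $(\cF_\C, {'\cF_\C})$ of semiclosed generalized flags in $V_\C, W_\C$ or a self-taut semiclosed generalized flag $\cF_\C$ in $V_\C$, together with infinite trace conditions cutting out $\gp_\C$ from the normalizer. Because $\sigma(\gp_\C) = \gp_\C$ and because this flag data is essentially determined by $\gp_\C$ (uniquely in the $\gsl/\ggl/\gsp$ cases, up to the trichotomy of Theorem \ref{self-norm-cpx-parab-so} in the $\gso$ case), $\sigma$ acts on it. When $\gg_\R$ has two inequivalent defining representations, either $\sigma$ preserves each of $\cF_\C$ and ${'\cF_\C}$ separately (the $\gsl(\infty;\R), \ggl(\infty;\R), \gsl(\infty;\H)$ cases) or $\sigma$ interchanges them through the identification of $V_\C$ with $W_\C$ produced by a Hermitian form (the $\gsu(*,\infty), \gu(*,\infty)$ cases). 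Galois descent of each $\sigma$-stable subspace yields a $\D$-subspace of $V_\R$ (or $W_\R$), producing a taut couple of generalized $\D$-flags whose stabilizer complexifies to the complex stabilizer and whose infinite trace conditions are the $\sigma$-descent of the complex ones. When $\gg_\R$ has only one defining representation the argument is parallel, starting from the self-taut flag $\cF_\C$.

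The main obstacle is the $\gso$ case with three candidate flags in Theorem \ref{self-norm-cpx-parab-so}: one must show that $\sigma$ either fixes the intrinsic flag $\{F \in \cF_\C \mid F \subset L \text{ or } L^\perp \subset F\}$, in which case that flag descends to $V_\R$, or permutes $\{M_1, M_2\}$ and one can pass to a $\sigma$-invariant union. A closely related subtlety is the interplay of $\sigma$ with the quaternionic structure in the $\gsl(\infty;\H)$ and $\gso^*(2\infty)$ cases, where one must check that a $\sigma$-invariant complex subspace is automatically an $\H$-subspace, using the commutation of $\sigma$ with multiplication by a quaternionic unit orthogonal to the chosen $i$. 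Once these two points are settled, the correspondence is a formal Galois descent, and the group version follows by integrating infinitesimal stabilizers and converting infinite trace conditions into infinite determinant conditions.
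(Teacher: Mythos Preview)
The paper does not prove this theorem. It is stated immediately after Definition~\ref{defrealp} with the preface ``The main result of \cite{DCPW} is'', and no argument is given; the proof lives entirely in the cited reference of Dan--Cohen, Penkov, and Wolf. So there is no ``paper's own proof'' to compare against.

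On the merits of your sketch: the Galois--descent strategy is the natural one and is presumably what \cite{DCPW} carries out. Your sufficiency direction is essentially fine. In the necessity direction, the crucial step---that $\sigma$ must preserve the complex flag data because that data is determined (up to the $\gso$ trichotomy) by the self--normalizer $\widetilde{\gp}_\C$, and $\sigma(\widetilde{\gp}_\C)=\widetilde{\gp}_\C$---is correctly identified. But you have not actually \emph{settled} the two points you flag at the end, and both carry real content. For the quaternionic cases ($\gsl(\infty;\H)$, $\gso^*(2\infty)$, $\gsp(*,*)$) you need more than ``$\sigma$ commutes with a unit orthogonal to $i$'': the antilinear $\sigma$ on $V_\C$ is precisely right multiplication by $j$, and a $\sigma$--stable $\C$--subspace is then $j$--stable, hence an $\H$--subspace; this should be stated, not deferred. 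For the $\gso$ trichotomy, you need to argue that $L$ and $L^\perp$ are themselves $\sigma$--stable (they are, being intrinsic to $\widetilde{\gp}_\C$), so the first of the three flags always descends, regardless of whether $\sigma$ swaps $M_1$ and $M_2$. Finally, you pass silently over why complexification of a semiclosed generalized $\D$--flag yields a \emph{semiclosed} generalized $\C$--flag, and conversely why the descended flag is semiclosed; this requires checking that the Mackey closure operation commutes with extension and restriction of scalars, which is elementary but not vacuous. As written, your argument is a correct outline with the hardest verifications postponed rather than done.
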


\section{Levi Components of Complex Parabolics}\label{sec4}
\setcounter{equation}{0}
In this section we discuss Levi components of complex parabolic subalgebras,
recalling results from \cite{DP1}, \cite{DP2}, \cite{DC2}, \cite{DP3},
\cite{DC3} and \cite{W6}.  We start with the definition.

\begin{definition}\label{levi}
{\em Let $\gp$ be a locally finite Lie algebra
and $\gr$ its locally solvable radical.  A subalgebra $\gl \subset \gp$ is a
{\em Levi component} if $[\gp,\gp]$ is the semidirect sum
$(\gr \cap [\gp,\gp]) \subsetplus \gl$.}\hfill $\diamondsuit$
\end{definition}

Every finitary Lie algebra has a Levi
component.  Evidently, Levi components are maximal semisimple subalgebras,
but the converse fails for finitary Lie algebras.  In any case,
parabolic subalgebras of our classical Lie algebras $\gg_\C$ have maximal 
semisimple subalgebras, and those are their Levi components.

\begin{definition}\label{standard}
{\em Let $X \subset V$ and $Y \subset W$ be paired subspaces, isotropic in the
orthogonal and symplectic cases.  The subalgebras 
$$
\begin{aligned}
&\ggl(X,Y) \subset \ggl(V,W) \phantom{an}\text{ and }  \gsl(X,Y) \subset \gsl(V,W),\\
&\Lambda \ggl(X,Y) \subset \Lambda \ggl(V,V) \text{ and }
S\ggl(X,Y) \subset S\ggl(V,V)
\end{aligned}
$$ 
are called {\em standard}.}\hfill $\diamondsuit$
\end{definition}

\begin{proposition}\label{struc-levi}
A subalgebra $\gl_\C \subset \gg_\C$ is the Levi
component of a parabolic subalgebra of $\gg_\C$
if and only if it is the direct sum of standard special linear
subalgebras and at most one subalgebra $\Lambda \ggl(X,Y)$ in the orthogonal case,
at most one subalgebra $S\ggl(X,Y)$ in the symplectic case.
\end{proposition}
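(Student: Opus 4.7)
The approach is to extract the Levi component directly from the classification of parabolics established in Section~\ref{sec2}. By Theorem~\ref{gen-cpx-parab} every parabolic $\gp$ arises from a self-normalizing parabolic $\widetilde{\gp}$ by imposing infinite trace conditions; such conditions cut only into the centre of the maximal reductive quotient, leaving $[\gp,\gp]$ and its intersection with the locally solvable radical unchanged, and hence leaving the Levi component unchanged. This reduces the forward direction to analysing $\widetilde{\gp}$, which by Theorems~\ref{self-norm-cpx-parab} and~\ref{self-norm-cpx-parab-so} is the normalizer of a taut couple $(\cF,{'\cF})$ in the $\gsl/\ggl$ case, or of a self-taut semiclosed generalized flag $\cF$ in the $\gso/\gsp$ case. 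The plan is to identify the nilradical $\gn$ with the ideal of operators strictly lowering the filtration (sending each $F\in\cF$ into its immediate predecessor, and similarly for $'\cF$), and then to decompose the reductive quotient $\widetilde{\gp}/\gn$ as a direct product indexed by IPS pairs: each IPS pair $(F',F)$ of $\cF$, together with its dual pair $({'F'},{'F})$ in $'\cF$ that tautness forces to pair nondegenerately, contributes a factor isomorphic to $\ggl(F/F',\,{'F'}/{'F})$. Taking the derived algebra modulo radical then turns each such factor into the corresponding standard $\gsl(F/F',\,{'F'}/{'F})$.

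In the self-taut setting the IPS pairs come in orbits under $F\mapsto F^\perp$: an off-middle orbit $\{(F',F),(F^\perp,F'^\perp)\}$ yields a standard $\gsl$ factor embedded in $\gg_\C$ via the antisymmetrization $\Lambda$ (orthogonal case) or symmetrization $S$ (symplectic case), while the flag may carry at most one ``middle'' orbit — an IPS pair in which an isotropic $L\in\cF$ has immediate successor $L^\perp$ — and this contributes one copy of $\Lambda\ggl(X_0,Y_0)$ or $S\ggl(X_0,Y_0)$, with $X_0\oplus Y_0$ a maximal isotropic splitting of the middle quotient $L^\perp/L$ pinned down by the flag. For the converse, given an algebra $\gl_\C$ of the prescribed shape, I would assemble a taut couple (or self-taut flag) whose IPS quotients realise the prescribed $(X_i,Y_i)$ — and, if present, whose middle pair realises $(X_0,Y_0)$ — so that the normalizer of this data is a self-normalizing parabolic whose Levi is $\gl_\C$ by the forward analysis.

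The main obstacle will be fixing the shape of the middle factor in the orthogonal and symplectic cases: without care one might leave an undecomposed $\gso(L^\perp/L)$ or $\gsp(L^\perp/L)$ piece, which does not fit the prescribed form. To rule this out I would invoke the fact that every parabolic must contain a Borel, which forces the self-taut flag to be refined to the point where the middle quotient $L^\perp/L$ acquires an isotropic splitting $X_0\oplus Y_0$; I would then use Theorem~\ref{self-norm-cpx-parab-so}'s classification of the (at most three) self-taut flags sharing a common stabilizer to verify that this splitting, and hence the middle Levi factor $\Lambda\ggl(X_0,Y_0)$ or $S\ggl(X_0,Y_0)$, is canonically determined by the parabolic itself. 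A routine secondary check is that tautness is precisely the nondegeneracy condition needed for the pairings $F/F'\times{'F'}/{'F}\to\C$, so that each off-middle summand really is a standard $\ggl$ factor rather than a degenerate quotient.
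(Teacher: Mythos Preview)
The paper does not prove Proposition~\ref{struc-levi}: Section~\ref{sec4} explicitly recalls it, together with the sharper Theorems~\ref{glpar} and~\ref{sosplevi}, from \cite{DC2} and \cite{DC3}, so there is no in-paper argument to compare against. Your reduction to self-normalizing parabolics via Theorem~\ref{gen-cpx-parab} and your analysis of the IPS-pair factors in the $\gsl/\ggl$ case are along the right lines and match those references.

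Your handling of the middle factor in the $\gso/\gsp$ case, however, is wrong. Comparing with Theorem~\ref{sosplevi}, the ``at most one $\Lambda\ggl(X,Y)$'' (resp.\ $S\ggl(X,Y)$) summand is the full $\gso(Z)$ (resp.\ $\gsp(Z)$) on a nondegenerate middle subquotient $Z$; it is \emph{not} $\Lambda(X_0\otimes Y_0)$ for an isotropic splitting $Z=X_0\oplus Y_0$. That latter object is only an embedded copy of $\ggl(X_0,Y_0)$, which carries a centre and so cannot be a direct summand of a Levi component (Definition~\ref{levi} forces $\gl$ to be semisimple). Your proposed fix --- ``every parabolic contains a Borel, which forces the self-taut flag to be refined so that $L^\perp/L$ acquires an isotropic splitting'' --- conflates $\gp$ with a Borel sitting inside it: refining $\cF$ produces a strictly smaller parabolic with a different Levi, and says nothing about the Levi of $\gp$ itself. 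The $\gso(L^\perp/L)$ or $\gsp(L^\perp/L)$ piece you flag as an ``obstacle'' should not be ruled out; it is precisely the extra summand the proposition permits.
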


The occurrence of ``at most one subalgebra'' in Proposition \ref{struc-levi}
is analogous to the finite dimensional case, where it is seen by deleting some
simple root nodes from a Dynkin diagram.
\smallskip

Let $\gp$ be the parabolic subalgebra of $\gsl(V, W)$ or $\ggl(V, W)$ defined
by the taut couple $(\cF, {'\cF})$ of semiclosed generalized flags.  Denote
\addtocounter{equation}{1}
\begin{equation}\label{J}\tag{\thesection.\arabic{equation}}
\begin{aligned}
&J = \{(F',F'') \text{ IPS pair in } \cF \mid F' = (F')^{\perp\perp}
	\text{ and } \dim F''/F' > 1\},\\
&'J = \{('F',{'F''}) \text{ IPS pair in } {'\cF} \mid {'F}' = ('F')^{\perp\perp},
         \dim {'F''}/{'F'} > 1\}.
\end{aligned}
\end{equation}
Since $V \times W \to \C$ is nondegenerate the sets $J$ and $'J$ are in
one to one correspondence by: $(F''/F') \times ({'F''}/{'F'}) \to \C$ is
nondegenerate.  We use this to identify $J$ with $J'$, and we write
$(F_j',F_j'')$ and $('F_j',{'F_j''})$ treating $J$ as an index set.

\begin{theorem}\label{glpar}
Let $\gp$ be the parabolic subalgebra of $\gsl(V, W)$ or $\ggl(V, W)$ defined
by the taut couple $\cF$ and $'\cF$ of semiclosed generalized flags.  For each
$j \in J$ choose a subspace $X_j \subset V$ and a subspace $Y_j \subset W$
such that $F_j'' = X_j + F_j'$ and $'F_j'' = Y_j + {'F_j}'$
Then $\bigoplus_{j \in J}\, \gsl(X_j,Y_j)$ is a Levi component of $\gp$. The
inclusion relations of $\cF$ and $'\cF$ induce a total order on $J$.

Conversely, if $\gl$ is a Levi component of $\gp$ then there exist subspaces
$X_j \subset V$ and $Y_j \subset W$ such that 
$\gl = \bigoplus_{j \in J}\, \gsl(X_j,Y_j)$.
\end{theorem}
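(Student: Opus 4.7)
The plan is to analyze $\gp$ through its natural projection onto the graded pieces associated to the IPS pairs in $J$, and to assemble a Levi component block by block. For each $j \in J$, the closedness $F_j' = (F_j')^{\perp\perp}$ together with the taut-couple identification of $J$ with $'J$ guarantees that $(F_j''/F_j') \times ('F_j''/'F_j') \to \C$ is nondegenerate, so $X_j$ and $Y_j$ pair nondegenerately, and $Y_j \subset {'F_j''} \subset (F_j')^\perp$. For any $x\otimes y \in X_j \otimes Y_j$ and any $F \in \cF$, the total order on the flag forces either $F \subseteq F_j'$ (whence $(x\otimes y)F = 0$) or $F \supseteq F_j''$ (whence $(x\otimes y)F \subseteq X_j \subseteq F_j'' \subseteq F$); the symmetric check in $W$ handles $'\cF$. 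Thus each $\gsl(X_j, Y_j)$ lies in $\gp$.

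With complements chosen compatibly so that $\langle Y_j, X_k\rangle = 0$ for $j\ne k$ (achievable by placing the $X_j$ in distinct IPS subquotients of $V$ and then picking each $Y_j$ in the matching dual position), the commutators $[\gsl(X_j, Y_j), \gsl(X_k, Y_k)]$ vanish for $j \ne k$, so $\gl := \bigoplus_j \gsl(X_j, Y_j)$ is a direct sum of simple ideals and therefore semisimple. To recognize $\gl$ as a Levi component, I would examine the projection $\pi : \gp \to \prod_j \ggl(F_j''/F_j')$ onto the graded pieces. Its kernel consists of operators that strictly lower the flag on each IPS pair, hence is locally nilpotent and sits inside the locally solvable radical $\gr$; subtracting the $\gl$-part and the abelian trace-scalar parts from any element of $\gp$ leaves something in $\ker \pi$. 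A short verification then gives $[\gp,\gp] = (\gr \cap [\gp,\gp]) \subsetplus \gl$. The total order on $J$ is inherited directly from the inclusion order on $\cF$, since distinct IPS pairs are nested.

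For the converse, let $\gl'$ be any Levi component of $\gp$. Proposition \ref{struc-levi} gives $\gl' = \bigoplus_\alpha \gsl(X_\alpha', Y_\alpha')$ for some paired subspaces. Since $\gl' \subset \gp$ stabilizes $\cF$ and $'\cF$, each simple summand's nontrivial action is confined to a single graded piece $F_j''/F_j'$, and tracking this assignment yields a bijection between the indices $\alpha$ and the set $J$. The conditions $\dim F_j''/F_j' > 1$ and $F_j' = (F_j')^{\perp\perp}$ in the definition of $J$ are exactly what ensure the corresponding $\gsl$ block is nontrivial and is not forced into the radical by a degeneration of the pairing on the subquotient.

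The hardest part, I expect, is showing $\ker \pi \subseteq \gr$ and that no semisimple piece escapes through the ``gaps'' created by non-closed flag elements or by limit-ordinal behavior in the flag ordering (cf.\ Example \ref{lim-ord}). This rules out any descending induction on flag depth; instead, one must argue locally at each IPS pair using the semiclosedness hypothesis and the taut-couple structure, verifying that every operator in $\gp$ either localizes to some $\gsl(X_j, Y_j)$ up to radical or lies entirely in $\gr$. The closedness condition $F_j' = (F_j')^{\perp\perp}$ is the precise ingredient that makes this dichotomy work, distinguishing the genuine semisimple contributions from the radical-absorbed ones and cutting off any spurious simple summand that an unwary choice of flag might suggest.
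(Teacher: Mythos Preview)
The paper does not give a proof of Theorem~\ref{glpar}. Section~\ref{sec4} is explicitly a recapitulation of results from \cite{DP1}, \cite{DP2}, \cite{DC2}, \cite{DP3}, \cite{DC3} and \cite{W6}, and the theorem is stated without argument, the text passing directly to the construction sketched before Theorem~\ref{sosplevi}. There is therefore nothing in this paper to compare your proposal against; the result is due to Dan--Cohen and Penkov and is merely quoted here.

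Your outline is in the right spirit for how the argument runs in those sources. One point deserves sharpening. You frame the orthogonality $\langle Y_j, X_k\rangle = 0$ for $j\ne k$ as something arranged by a compatible choice of complements, with the parenthetical justification ``placing the $X_j$ in distinct IPS subquotients of $V$ and then picking each $Y_j$ in the matching dual position.'' But the $X_j$ already sit in distinct IPS subquotients by hypothesis, and that alone does not force both directions of orthogonality. One direction is automatic from the nesting of IPS pairs together with ${'F_k''}\subset (F_k')^\perp$: if $F_j''\subseteq F_k'$ then $\langle X_j,Y_k\rangle\subset\langle F_k',(F_k')^\perp\rangle=0$. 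The reverse pairing $\langle X_k,Y_j\rangle$, however, genuinely depends on the choice of complement; an arbitrary $X_k$ complementary to $F_k'$ in $F_k''$ need not annihilate $Y_j$, and then the summands $\gsl(X_j,Y_j)$ and $\gsl(X_k,Y_k)$ fail to commute inside $\ggl(V,W)$ and their sum is not even a subalgebra. So either the word ``choose'' in the theorem is to be read as ``for a suitable choice'' (which is what your argument actually establishes), or one must show separately that every choice yields a subalgebra conjugate to a good one; the papers \cite{DC2} and \cite{DC3} treat this point with care. Your identification of $\ker\pi$ as locally nilpotent and hence contained in the radical, and your use of Proposition~\ref{struc-levi} for the converse, are both on target.
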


Now the idea of finite matrices with blocks down the diagonal suggests the
construction of $\gp$ from the totally ordered set $J$ and the direct sum
$\gl = \bigoplus_{j \in J}\, \gsl(X_j,Y_j)$ of standard special linear 
algebras.  We outline the idea of the construction; see \cite{DC3}.  First, 
$\langle X_j, Y_{j'}\rangle = 0$ for $j \ne j'$ because the
$\gs_j = \gsl(X_j,Y_j)$ commute with each other.  Define
$U_j := (( \bigoplus_{k \leqq j}\, X_k)^\perp \oplus Y_j)^\perp$. 
Then one proves $U_j = ((U_j \oplus X_j)^\perp \oplus Y_j)^\perp$.
From that, one shows that there is a unique semiclosed generalized flag 
$\cF_{min}$ in $V$ with the same stabilizer as the set 
$\{U_j, U_j \oplus X_j\,  |\,  j \in J\}$. 
One constructs similar subspaces $'U_j \subset W$ and shows that there is a 
unique semiclosed generalized flag $'\cF_{min}$ in $W$ with the same stabilizer 
as the set $\{'U_j, {'U}_j \oplus Y_j \, |\,  j \in J\}$.  In fact
$(\cF_{min} , {'\cF}_{min})$ is the minimal taut couple with IPS
pairs $U_j \subset (U_j \oplus X_j)$ in $\cF_0$ and 
$(U_j \oplus X_j)^\perp \subset ((U_j \oplus X_j)^\perp \oplus Y_j)$ in ${'\cF}_0$ 
for $j \in J$.  If $(\cF_{max}, {'\cF}_{max})$ is maximal among the taut couples of
semiclosed generalized flags with IPS pairs $U_j \subset (U_j \oplus X_j)$ 
in $\cF_{max}$
and $(U_j \oplus X_j)^\perp \subset ((U_j \oplus X_j)^\perp \oplus Y_j)$ in 
${'\cF}_{max}$ then the corresponding parabolic $\gp$ has Levi component $\gl$.
\smallskip

The situation is essentially the same for Levi components of
parabolic subalgebras of $\gg_\C = \gso(\infty;\C) \text{ or }
\gsp(\infty;\C)$, except that we modify the definition (\ref{J}) of $J$ to add
the condition that $F''$ be isotropic, and we add the orientation aspect
of the $\gso$ case.

\begin{theorem}\label{sosplevi}
Let $\gp$ be the parabolic subalgebra of $\gg_\C = \gso(V)$ or $\gsp(V)$,
defined by the self--taut semiclosed generalized flag $\cF$.  Let 
$\widetilde{F}$ be the union of all subspaces $F''$ in IPS pairs 
$(F',F'')$ of $\cF$ for which $F''$ is isotropic.  Let $\widetilde{'F}$ be the 
intersection of all subspaces $F'$ in IPS pairs for which $F'$ is
closed ($F' = (F')^{\perp\perp}$) and coisotropic.
Then $\gl$ is a Levi component of $\gp$ if and
only if there are isotropic subspaces $X_j, Y_j$ in $V$ such that
$$
\text{$F''_j =F'_j + X_j$ and ${'F''_j} ={'F_j} +Y_j$ for every $j \in J$}
$$
and a subspace $Z$ in $V$ such that
$\widetilde{F} = Z + \widetilde{'F}$,
where $Z = 0$ in case $\gg_\C = \gso(V)$ and 
$\dim \widetilde{F}/\widetilde{'F} \leqq 2$, such that 
$$
\begin{aligned}
&\gl = \gsp(Z) \oplus {\bigoplus}_{j \in J}\ \gsl(X_j,Y_j) \text{ if }
	\gg_\C = \gsp(V),\\
&\gl = \gso(Z) \oplus {\bigoplus}_{j \in J}\ \gsl(X_j,Y_j) \text{ if }
	\gg_\C = \gso(V).
\end{aligned}
$$
Further, the inclusion relations of $\cF$ induce a 
total order on $J$ which leads to a construction of $\gp$ from $\gl$.
\end{theorem}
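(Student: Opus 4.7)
The plan is to parallel the argument for Theorem \ref{glpar}, using the self--duality $V \cong W$ induced by the defining bilinear form to convert a self--taut generalized flag $\cF$ into a taut couple $(\cF, {'\cF})$, and then to account for the one ``middle'' summand ($\gsp(Z)$ or $\gso(Z)$) allowed by Proposition \ref{struc-levi}. From \cite{DCPW} we already know that every $F \in \cF$ is isotropic or coisotropic, so the index set $J$ of IPS pairs $(F'_j, F''_j)$ with $F''_j$ isotropic and $F'_j = (F'_j)^{\perp\perp}$ is well defined; its elements sit below $\widetilde F$, while their annihilators in $W \cong V$ sit above $\widetilde{'F}$, and the quotient $\widetilde F / \widetilde{'F}$ (isotropic mod its annihilator) is exactly where a nondegenerate form survives.

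For necessity, start with a parabolic $\gp$ stabilizing a self--taut semiclosed generalized flag $\cF$. Apply Theorem \ref{glpar} to the underlying $\ggl(V,W)$--parabolic $\widehat\gp$ obtained by forgetting the form: its Levi component is $\bigoplus_{j\in\widehat J}\gsl(X_j,Y_j)$ for some choice of $X_j,Y_j$. The self--taut condition means the form interchanges $X_j$ with some $Y_{\sigma(j)}$, which pairs up those IPS pairs $(F'_j,F''_j)$ whose top $F''_j$ is not isotropic; the involution $\sigma$ has no fixed index except possibly one pair sitting across $\widetilde F/\widetilde{'F}$. Restricting to the $\sigma$--invariants and using Proposition \ref{struc-levi} identifies $\gl \cap (\gs_j \oplus \gs_{\sigma(j)})$ with $\gsl(X_j,Y_j)$ (one copy per orbit) and the fixed piece with $\gsp(Z)$ or $\gso(Z)$ on $Z$, where $Z$ is a complement to $\widetilde{'F}$ in $\widetilde F$. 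The orthogonal constraint $Z = 0$ when $\dim \widetilde F/\widetilde{'F} \leqq 2$ reflects the non--existence of a nonabelian $\gso$ in dimensions $\leqq 2$ and dovetails with Theorem \ref{self-norm-cpx-parab-so}.

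For sufficiency, given $Z$ and $\{(X_j,Y_j)\}_{j\in J}$ satisfying the listed compatibilities, copy the $\gsl$--construction outlined after Theorem \ref{glpar}: set $U_j := \bigl((\bigoplus_{k\leqq j}X_k)^\perp \oplus Y_j\bigr)^\perp$, verify $U_j = ((U_j\oplus X_j)^\perp\oplus Y_j)^\perp$ using that each $X_k$ is isotropic, and extract a semiclosed generalized flag whose IPS pairs include $U_j \subset U_j \oplus X_j$ for all $j\in J$ together with $\widetilde{'F} \subset \widetilde{'F}\oplus Z$ in the $\gsp$ case (or the two analogues in the $\gso$ case). The symmetry under the form forces this flag to be self--taut, and one then enlarges to a maximal self--taut semiclosed generalized flag with these IPS pairs intact; its stabilizer is a parabolic whose Levi component is exactly the prescribed $\gl$. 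The total order on $J$ is inherited from $\cF$ by the inclusion order on the $U_j \oplus X_j$.

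The main obstacle is the middle piece. One must show that after extracting the isotropic $X_j$'s indexed by $J$, the ``leftover'' subquotient $\widetilde F/\widetilde{'F}$ carries a nondegenerate induced form, that $Z$ can always be chosen as a form--compatible complement to $\widetilde{'F}$ in $\widetilde F$, and that in the orthogonal case the exceptional trichotomy of Theorem \ref{self-norm-cpx-parab-so} is exactly what forces $Z = 0$ when $\dim \widetilde F/\widetilde{'F} \leqq 2$. Once this is in hand, the interplay between $J$ and the self--duality collapses cleanly onto Theorem \ref{glpar} and Proposition \ref{struc-levi}, and the construction of $\gp$ from $\gl$ proceeds verbatim as in the $\gsl$ case, with the additional IPS pair $\widetilde{'F} \subset \widetilde F$ inserted in the middle.
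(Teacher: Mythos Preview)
The paper does not prove Theorem~\ref{sosplevi}; Section~\ref{sec4} explicitly recalls this result from \cite{DC2} and \cite{DC3}, and the theorem is stated without argument. So there is no proof in the paper to compare against, only the outline of the $\gsl$ construction between Theorems~\ref{glpar} and~\ref{sosplevi}, which the paper says carries over with the indicated modifications.

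On the merits of your proposal: the strategy of lifting to the ambient $\ggl(V,W)$--parabolic $\widehat{\gp}$ and then taking form--invariants is natural, but the step ``restricting to the $\sigma$--invariants and using Proposition~\ref{struc-levi} identifies $\gl \cap (\gs_j \oplus \gs_{\sigma(j)})$ with $\gsl(X_j,Y_j)$'' hides a genuine gap. A Levi component $\gl$ of $\gp$ is defined intrinsically (Definition~\ref{levi}) as a complement to the radical inside $[\gp,\gp]$; you have not shown that such an $\gl$ sits inside a $\sigma$--stable Levi component of $\widehat{\gp}$, nor that the $\sigma$--fixed subalgebra of a Levi component of $\widehat{\gp}$ is a Levi component of $\gp$. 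Neither direction is automatic in the locally finite setting, where conjugacy of Levi components is delicate. The arguments in \cite{DC3} instead work directly: they identify $\gp_{red}$ via its faithful action on the successive quotients $F''_j/F'_j$ of the flag, read off the simple summands from those quotients, and locate the single $\gso(Z)$ or $\gsp(Z)$ piece from the one quotient on which the form survives nondegenerately. Your ``middle piece'' discussion gestures at this last point but does not supply it; in particular the claim that $\widetilde F/\widetilde{'F}$ carries a nondegenerate induced form needs the identification $\widetilde{'F} = \widetilde F^{\perp}$ (or its analogue), which you have not established.
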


\section{Chevalley Decomposition}\label{sec5}
\setcounter{equation}{0}
In this section we apply the extension \cite{DC2} to our parabolic subalgebras,
of the Chevalley decomposition for a (finite dimensional) algebraic Lie algebra.
\smallskip

Let $\gp$ be a locally finite linear Lie algebra, in our case a subalgebra 
of $\ggl(\infty)$.  Every element $\xi \in \gp$ has a Jordan canonical form,
yielding a decomposition $\xi = \xi_{ss} + \xi_{nil}$ into semisimple and
nilpotent parts.  The algebra $\gp$ is {\em splittable} if it contains the 
semisimple and the nilpotent parts of each of its elements.  Note that
$\xi_{ss}$ and $\xi_{nil}$ are polynomials in $\xi$; this follows from the
finite dimensional fact.  In particular, if $X$ is any $\xi$--invariant
subspace of $V$ then it is invariant under both $\xi_{ss}$ and $\xi_{nil}$.
\smallskip

Conversely, parabolic subalgebras (and many others) of our classical 
Lie algebras $\gg$ are splittable.
\smallskip

The {\em linear nilradical} of a subalgebra $\gp \subset \gg$ is the set 
$\gp_{nil}$ of all nilpotent elements of the locally solvable radical $\gr$
of $\gp$.  It is a locally nilpotent ideal in $\gp$ and satisfies
$\gp_{nil} \cap [\gp, \gp] = \gr \cap [\gp, \gp]$.
\smallskip

If $\gp$ is splittable then it has a well defined maximal locally reductive 
subalgebra $\gp_{red}$.  This means that $\gp_{red}$ is an increasing union of
finite dimensional reductive Lie algebras, each reductive in the next.
In particular $\gp_{red}$ maps isomorphically under the projection 
$\gp \to \gp/\gp_{nil}$.  That gives a semidirect sum decomposition
$\gp = \gp_{nil} \subsetplus \gp_{red}$ analogous to the Chevalley
decomposition mentioned above.  Also, here, 
$$
\gp_{red} = \gl\subsetplus\gt \quad \text{ and } \quad [\gp_{red},\gp_{red}] = \gl
$$
where $\gt$ is a toral subalgebra and $\gl$ is the Levi component of $\gp$.
A glance at $\gu(\infty)$ or $\ggl(\infty;\C)$ shows that the semidirect 
sum decomposition of $\gp_{red}$ need not be direct.

\section{Levi and Chevalley Components of Real Parabolics}\label{sec6}
\setcounter{equation}{0}
Now we adapt the material of Sections \ref{sec4} and \ref{sec5} to study Levi 
and Chevalley components of
real parabolic subalgebras in the real classical Lie algebras.  
\smallskip

Let $\gg_\R$ be a real form of a classical locally finite complex simple 
Lie algebra $\gg_\C$.  Consider a real parabolic subalgebra $\gp_\R$.  It
has form $\gp_\R = \gp_\C \cap \gg_\R$ where its complexification $\gp_\C$
is parabolic in $\gg_\C$.  Let $\tau$ denote complex conjugation of
$\gg_\C$ over $\gg_\R$.  Then the locally solvable radical $\gr_\C$ of
$\gp_\C$ is $\tau$--stable because $\gr_\C + \tau\gr_\C$ is a locally
solvable ideal, so the locally solvable radical $\gr_\R$ of $\gp_\R$ is
a real form of $\gr_\C$.
\smallskip

Let $\gl_\R$ be a maximal semisimple subalgebra of $\gp_\R$.  Its
complexification $\gl_\C$ is a maximal semisimple subalgebra, hence a Levi
component, of $\gp_\C$.  Thus $[\gp_\C,\gp_\C]$ is the semidirect
sum $(\gr_\C \cap [\gp_\C,\gp_\C]) \subsetplus \gl_\C$.  The elements of
this formula all are $\tau$--stable, so we have proved

\begin{lemma}\label{real-levi-1}
The Levi components of $\gp_\R$ are real forms of the Levi components
of $\gp_\C$.
\end{lemma}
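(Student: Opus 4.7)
The plan is to transport the defining semidirect decomposition of a Levi component from $\gp_\R$ up to $\gp_\C$ by base change to $\C$. The discussion preceding the lemma already establishes the key structural input: the locally solvable radical $\gr_\C$ of $\gp_\C$ is $\tau$--stable, and hence $\gr_\R = \gr_\C \cap \gg_\R$ is a real form of $\gr_\C$. Since $\gp_\R$ is a real form of $\gp_\C$, the commutator subalgebra $[\gp_\R,\gp_\R]$ is likewise a real form of $[\gp_\C,\gp_\C]$. Once these facts are in hand, the lemma reduces to a linear-algebra transport.

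First I would take $\gl_\R$ to be any Levi component of $\gp_\R$, which exists because every finitary Lie algebra has one. By Definition \ref{levi},
\[
[\gp_\R,\gp_\R] = (\gr_\R \cap [\gp_\R,\gp_\R]) \subsetplus \gl_\R.
\]
Then I would set $\gl_\C := \gl_\R \otimes_\R \C \subseteq \gp_\C$ and apply $\otimes_\R \C$ to the displayed decomposition. Base change by $\C$ commutes with intersections of $\tau$--stable real subspaces and carries real (internal) direct sums to complex direct sums, so I would obtain
\[
[\gp_\C,\gp_\C] = (\gr_\C \cap [\gp_\C,\gp_\C]) \subsetplus \gl_\C,
\]
exhibiting $\gl_\C$ as a Levi component of $\gp_\C$ in the sense of Definition \ref{levi}. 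This says exactly that $\gl_\R$ is a real form of the Levi component $\gl_\C$ of $\gp_\C$.

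The main obstacle is a modest bookkeeping verification rather than any deep structural fact: one must check that $(\gr_\R \cap [\gp_\R,\gp_\R]) \otimes_\R \C = \gr_\C \cap [\gp_\C,\gp_\C]$ and that $\gl_\C \cap \gr_\C = 0$, both of which follow because all the real subspaces involved are $\tau$--stable with real forms inside $\gg_\R$. A secondary sanity check is that $\gl_\C$ is still semisimple as a locally finite Lie algebra, which reduces by writing $\gl_\R$ as an increasing union of finite-dimensional semisimple subalgebras to the finite-dimensional fact that complexification preserves semisimplicity. No input from the explicit classifications of Sections \ref{sec4}--\ref{sec5} is needed here, since the argument merely transports the abstract decomposition and never inspects which concrete Levi component occurs.
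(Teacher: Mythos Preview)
Your proposal is correct and follows essentially the same approach as the paper: both arguments start with a Levi component $\gl_\R$ of $\gp_\R$, complexify it, and verify that the resulting $\gl_\C$ is a Levi component of $\gp_\C$ using $\tau$--stability of the radical and the commutator subalgebra. The only cosmetic difference is that the paper phrases the intermediate step via the characterization ``maximal semisimple subalgebra, hence Levi component'' of $\gp_\C$, whereas you transport the semidirect decomposition of Definition~\ref{levi} directly; the content is the same.
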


If $\gg_\C$ is $\gsl(V,W)$ or $\ggl(V,W)$ as in 
Theorem \ref{glpar}, then $\gl_\C = \bigoplus_{j \in J}\, \gsl(X_j,Y_j)$
as indicated there.  Initially the possibilities for the action of $\tau$ are
\begin{itemize}
\item $\tau$ preserves $\gsl(X_j,Y_j)$ with fixed point set 
	$\gsl(X_{j,\R},Y_{j,\R}) \cong \gsl(*;\R)$,
\item $\tau$ preserves $\gsl(X_j,Y_j)$ with fixed point set
        $\gsl(X_{j,\H},Y_{j,\H}) \cong \gsl(*;\H)$, 
\item $\tau$ preserves $\gsl(X_j,Y_j)$ with f.p. set
        $\gsu(X'_j,X_j'') \cong \gsu(*,*)$, $X_j = X'_j + X_j''$, and
\item $\tau$ interchanges two summands $\gsl(X_j,Y_j)$ and
        $\gsl(X_{j'},Y_{j'})$ of $\gl_\C$, with fixed point set 
        the diagonal ($\cong \gsl(X_j,Y_j)$) of their direct sum.
\end{itemize}
If $\gg_\C = \gso(V)$ as in Theorem \ref{sosplevi}, $\gl_\C$ can also have a
summand $\gso(Z)$, or if $\gg_\C = \gsp(V)$ it can also have a summand
$\gsp(V)$.  Except when $A_4 = D_3$ occurs, these additional summands must
be $\tau$--stable, resulting in fixed point sets
\begin{itemize}
\item when $\gg_\C = \gso(V)$: $\gso(Z)^\tau$ is $\gso(*,*)$ or $\gso^*(2\infty)$,
\item when $\gg_\C = \gsp(V)$: $\gsp(Z)^\tau$ is $\gsp(*,*)$ or $\gsp(*;\R)$.
\end{itemize}

\section{Minimal Parabolic Subgroups}\label{sec7}
\setcounter{equation}{0}
We describe the structure of minimal parabolic subgroups of the
classical real simple Lie groups $G_\R$.  

\begin{proposition}\label{minlevi}
Let $\gp_\R$ be a parabolic subalgebra of $\gg_\R$ and $\gl_\R$ a Levi
component of $\gp_\R$.  If $\gp_\R$ is a minimal parabolic subalgebra
then $\gl_\R$ is a direct sum of
finite dimensional compact algebras $\gsu(p)$, $\gso(p)$ and $\gsp(p)$,
and their infinite dimensional 
limits $\gsu(\infty)$, $\gso(\infty)$ and $\gsp(\infty)$.
If $\gl_\R$ is a direct sum of
finite dimensional compact algebras $\gsu(p)$, $\gso(p)$ and $\gsp(p)$ and
their limits $\gsu(\infty)$, $\gso(\infty)$ and $\gsp(\infty)$, 
then $\gp_\R$ contains a minimal parabolic subalgebra of $\gg_\R$ with
the same Levi component $\gl_\R$.
\end{proposition}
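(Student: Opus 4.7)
The plan is to combine the classification of Levi components of real parabolic subalgebras from Section \ref{sec6} with the fact that the compact real forms $\gsu(p)$, $\gso(p)$, $\gsp(p)$ and their limits $\gsu(\infty)$, $\gso(\infty)$, $\gsp(\infty)$ admit no proper parabolic subalgebra in the sense of Definition \ref{defrealp}. This preliminary fact reduces to showing that no proper complex parabolic of $\gg_\C$ is stable under the compact conjugation $\tau$: such a parabolic stabilizes a (self--)taut semiclosed generalized flag each of whose subspaces must be $\tau$--invariant, but positive--definiteness of the Hermitian (respectively symmetric, respectively quaternionic--Hermitian) form defining the compact structure forces every $\tau$--invariant isotropic subspace to be zero and forces every $\tau$--invariant subspace to have a $\tau$--invariant orthogonal complement, so the flag is trivial.

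For the forward direction, suppose $\gp_\R$ is a minimal parabolic with Levi component $\gl_\R$. By Lemma \ref{real-levi-1} and the case analysis of Section \ref{sec6}, each simple summand of $\gl_\R$ is either a compact form as above, or one of the non--compact real forms $\gsl(*;\R)$, $\gsl(*;\H)$, $\gsu(*,*)$, $\gso(*,*)$, $\gso^*(2\infty)$, $\gsp(*,*)$, $\gsp(*;\R)$, or a ``diagonal'' $\gsl$ coming from a $\tau$--interchanged pair of complex summands. Each non--compact type does possess a proper parabolic subalgebra (stabilizing a proper real, quaternionic, or isotropic subspace). I would use such a proper parabolic to refine the taut couple or self--taut flag defining $\gp_\C$: inside the piece $X_j$ (or $Z$ in the $\gso$/$\gsp$ case) on which the offending summand acts, insert a $\tau$--compatible proper subspace to obtain a strictly finer semiclosed generalized flag. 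The associated real parabolic is properly contained in $\gp_\R$, contradicting minimality.

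For the backward direction, assume $\gl_\R$ is a direct sum of compact--type summands. I first claim that every parabolic $\gq_\R \subseteq \gp_\R$ still has Levi component $\gl_\R$. Indeed, modulo the linear nilradical of Section \ref{sec5}, the image of $\gq_\R$ inside the maximal reductive part of $\gp_\R$, intersected with $\gl_\R$, is a parabolic subalgebra of $\gl_\R$, hence equal to $\gl_\R$ by the preliminary observation. Thus $\gl_\R$ is a Levi component of $\gq_\R$. I would then produce a minimal parabolic inside $\gp_\R$ by the explicit construction given later in Proposition \ref{construct-p}: refining the (self--)taut generalized flag defining $\gp_\R$ as far as possible subject to the isotropic structure compatible with the compact--type Levi. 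The resulting parabolic is minimal in $\gg_\R$ precisely because any strictly smaller parabolic would, by the claim just proved, still have Levi $\gl_\R$, hence already be reached.

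The main obstacle is the ``no proper parabolic'' statement for the infinite--dimensional compact forms $\gsu(\infty)$, $\gso(\infty)$, $\gsp(\infty)$. In finite dimensions this is immediate from positive definiteness, but in the $\varinjlim$ setting one must verify it for semiclosed generalized flags of arbitrary order type (as in Example \ref{lim-ord}) and in the presence of the infinite trace conditions of Theorem \ref{gen-cpx-parab}. A secondary technical issue is to check that each refinement procedure in both directions produces a bona fide parabolic, that is, a stabilizer of a semiclosed generalized flag meeting the tautness axioms of Section \ref{sec3}, rather than an algebra that fails semiclosedness or tautness.
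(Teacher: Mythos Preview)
Your forward direction is essentially the paper's argument: if some simple summand of $\gl_\R$ is a non--compact real form, it admits a proper parabolic, and inserting the corresponding proper subspace into the defining flag of $\gp_\R$ produces a strictly smaller real parabolic, contradicting minimality. The case analysis you draw from Section~\ref{sec6} is exactly what the paper invokes via Theorems~\ref{glpar} and~\ref{sosplevi}.

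Your backward direction, however, diverges from the paper and has a gap. The paper does not pass through your intermediate claim about Levi components of sub--parabolics, and it does not invoke Proposition~\ref{construct-p}; that proposition takes a \emph{minimal} self--normalizing parabolic as input and produces another one with the same $\gm_\R$, so citing it here to manufacture a minimal parabolic from a non--minimal $\gp_\R$ is circular. The paper instead works directly with the flag machinery described between Theorems~\ref{glpar} and~\ref{sosplevi}: the minimal taut couple $(\cF_{min},{'\cF}_{min})$ attached to the Levi data is uniquely determined, hence automatically $\tau$--stable, and one then passes to a \emph{maximal} $\tau$--stable taut couple $(\cF^*_{max},{'\cF}^*_{max})$ of semiclosed generalized flags refining it. Its stabilizer in $\gg_\R$ is the desired minimal parabolic contained in $\gp_\R$, with Levi $\gl_\R$ by construction.

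Your phrase ``refining as far as possible'' points in this direction, but the concluding sentence (``any strictly smaller parabolic would still have Levi $\gl_\R$, hence already be reached'') does not follow: having the same Levi does not show the refinement construction terminates at, or even reaches, that smaller parabolic. In infinite dimensions you cannot simply appeal to Zorn on descending chains of parabolics, which is why the paper builds minimality into the construction by taking the flag to be maximal among $\tau$--stable taut couples from the outset.
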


\begin{proof} Suppose that $\gp_\R$ is a minimal parabolic subalgebra
of $\gg_\R$.  If a direct summand $\gl'_\R$ of  $\gl_\R$ has a
proper parabolic subalgebra $\gq_\R$, we replace $\gl'_\R$ by $\gq_\R$
in $\gl_\R$ and $\gp_\R$.  In other words we refine the flag(s) that define 
$\gp_\R$.  The refined flag defines a parabolic $\gq_\R \subsetneqq \gp_\R$.
This contradicts minimality.  Thus no summand of $\gl_\R$ has a proper 
parabolic subalgebra.
Theorems \ref{glpar} and \ref{sosplevi} show that $\gsu(p)$, $\gso(p)$ and 
$\gsp(p)$, and their limits $\gsu(\infty)$, $\gso(\infty)$ and $\gsp(\infty)$,
are the only possibilities for the simple summands of $\gl_\R$.
\smallskip

Conversely suppose that the summands of $\gl_\R$ are $\gsu(p)$, $\gso(p)$ and 
$\gsp(p)$ or their limits $\gsu(\infty)$, $\gso(\infty)$ and $\gsp(\infty)$.
Let $(\cF, {'\cF})$ or $\cF$ be the flag(s) that define $\gp_\R$.
In the discussion between Theorems \ref{glpar} and \ref{sosplevi} we described
a a minimal taut couple $(\cF_{min}, {'\cF}_{min})$ and a maximal taut couple 
$(\cF_{max}, {'\cF}_{max})$ (in the $\gsl$ and $\ggl$ cases) 
of semiclosed generalized flags which define parabolics that have the same
Levi component $\gl_\C$ as $\gp_\C$.  By construction $(\cF, {'\cF})$ refines
$(\cF_{min}, {'\cF}_{min})$ and $(\cF_{max}, {'\cF}_{max})$ refines 
$(\cF, {'\cF})$.  As $(\cF_{min}, {'\cF}_{min})$ is uniquely defined from
$(\cF, {'\cF})$ it is $\tau$--stable.  Now the maximal $\tau$--stable
taut couple $(\cF^*_{max}, {'\cF}^*_{max})$ of semiclosed generalized flags
defines a $\tau$--stable parabolic $\gq_\C$ with the same Levi component $\gl_\C$ 
as $\gp_\C$, and $\gq_\R := \gq_\C\cap\gg_\R$ is a minimal parabolic
subalgebra of $\gg_\R$ with Levi component $\gl_\R$.
\smallskip

The argument is the same when $\gg_\C$ is $\gso$ or $\gsp$. 
\end{proof}

Proposition \ref{minlevi} says that the Levi components of the minimal 
parabolics are the compact real forms, in the sense of \cite{S}, of the complex
$\gsl$, $\gso$ and $\gsp$.  We extend this notion.  
\smallskip

The group $G_\R$ has the natural {\em Cartan involution} $\theta$ such that
$d\theta((\gp_\R)_{red}) = (\gp_\R)_{red}$, defined as follows.  Every element
of $\gl_\R$ is elliptic, and $(\gp_\R)_{red} = \gl_\R \subsetplus \gt_\R$ where
$\gt_\R$ is toral, so every element of $(\gp_\R)_{red}$ is semisimple.
(This is where we use minimality of the parabolic $\gp_\R$.)
Thus $(\gp_\R)_{red}\cap \gg_{n,\R}$ is reductive in $\gg_{m,\R}$ 
for every $m \geqq n$.  Consequently we have Cartan involutions $\theta_n$ of
the groups $G_{n,\R}$ such that $\theta_{n+1}|_{G_{n,\R}} = \theta_n$
and  $d\theta_n((\gp_\R)_{red}\cap \gg_{n,\R}) = 
(\gp_\R)_{red}\cap \gg_{n,\R}$.  Now $\theta = \varinjlim \theta_n$
(in other words $\theta|_{G_{n,\R}} = \theta_n$)
is the desired Cartan involution of $\gg_\R$.  Note that $\gl_\R$ is 
contained in the fixed point set of $d\theta$.
\smallskip

The Lie algebra $\gg_\R = \gk_\R + \gs_\R$ where $\gk_\R$ is the
$(+1)$--eigenspace of $d\theta$ and $\gs_\R$ is the $(-1)$--eigenspace.
The fixed point set $K_\R = G_\R^\theta$ is the direct limit of the maximal 
compact subgroups $K_{n,\R} = G_{n,\R}^{\theta_n}$.  We will refer to $K_\R$ as
a {\em maximal lim--compact subgroup} of $G_\R$ and to $\gk_\R$ as a maximal
{\em lim--compact subalgebra} of $\gg_\R$.  By construction $\gl_\R \subset
\gk_\R$, as in the case of finite dimensional minimal parabolics.
Also as  in the finite dimensional
case (and using the same proof), $[\gk_\R,\gk_\R] \subset \gk_\R$,
$[\gk_\R,\gs_\R] \subset \gs_\R$ and $[\gs_\R,\gs_\R] \subset \gk_\R$.

\begin{lemma}\label{construct-ma}
Decompose $(\gp_\R)_{red} = \gm_\R + \ga_\R$ where 
$\gm_\R = (\gp_\R)_{red}\cap \gk_\R$ and $\ga_\R = (\gp_\R)_{red}\cap \gs_\R$.
Then $\gm_\R$ and $\ga_\R$ are ideals in $(\gp_\R)_{red}$ with $\ga_\R$ 
commutative.
In particular $(\gp_\R)_{red} = \gm_\R \oplus \ga_\R$, direct sum of ideals.
\end{lemma}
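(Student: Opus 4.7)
My plan is to read off $\gm_\R$ and $\ga_\R$ as the $d\theta$--eigenspaces of $(\gp_\R)_{red}$, use the three Cartan--style bracket relations to constrain the pairwise brackets, and then deduce commutativity of $\ga_\R$ from the fact that its elements are semisimple.

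Since $d\theta$ is an involution preserving $(\gp_\R)_{red}$, the latter is the internal vector--space direct sum of its $(\pm 1)$--eigenspaces under $d\theta$, which by definition are exactly $\gm_\R = (\gp_\R)_{red}\cap\gk_\R$ and $\ga_\R = (\gp_\R)_{red}\cap\gs_\R$. Combining this decomposition with $[\gk_\R,\gk_\R]\subset\gk_\R$, $[\gk_\R,\gs_\R]\subset\gs_\R$, and $[\gs_\R,\gs_\R]\subset\gk_\R$ from the paragraph preceding the lemma yields $[\gm_\R,\gm_\R]\subset\gm_\R$, $[\gm_\R,\ga_\R]\subset\ga_\R$, and $[\ga_\R,\ga_\R]\subset\gm_\R$.

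The next step is to pin down $[\gm_\R,\ga_\R]=0$. From Section~\ref{sec5} we have $\gl_\C = [(\gp_\C)_{red},(\gp_\C)_{red}]$, and taking $\tau$--fixed points gives $\gl_\R = [(\gp_\R)_{red},(\gp_\R)_{red}]$. Since $\gl_\R\subset\gk_\R$ (noted above the lemma), $[\gm_\R,\ga_\R]\subset\gl_\R\subset\gm_\R$; combined with $[\gm_\R,\ga_\R]\subset\ga_\R$ this forces $[\gm_\R,\ga_\R]\subset\gm_\R\cap\ga_\R=0$, so $\gm_\R$ and $\ga_\R$ centralize each other in $(\gp_\R)_{red}$.

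The main obstacle is showing $\ga_\R$ is commutative, and I would use a short Jacobi argument that exploits minimality of $\gp_\R$. For $X,Y\in\ga_\R$ the bracket $[X,Y]$ lies in $\gm_\R$ by the previous step, so $[[X,Y],X]\in[\gm_\R,\ga_\R]=0$; Jacobi gives $\ad(X)^2 Y = -[[X,Y],X] = 0$. Because $\gp_\R$ is a minimal parabolic, every element of $(\gp_\R)_{red}$ is semisimple (as spelled out in the Cartan involution construction above the lemma), so $\ad(X)$ is a semisimple endomorphism of $\gg_\R$; hence $\ker\ad(X)^2 = \ker\ad(X)$, forcing $[X,Y]=0$ and $\ga_\R$ commutative. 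With $[\gm_\R,\gm_\R]\subset\gm_\R$, $[\gm_\R,\ga_\R]=0$, and $[\ga_\R,\ga_\R]=0$ in hand, both summands are ideals of $\gm_\R\oplus\ga_\R=(\gp_\R)_{red}$, completing the proof. A cleaner alternative would be to produce a $d\theta$--stable toral complement $\gt_\R$ to $\gl_\R$ in $(\gp_\R)_{red}$, forcing $\ga_\R\subset\gt_\R$ directly, but that existence requires a separate averaging argument across the direct--limit filtration, so the Jacobi route is more self--contained.
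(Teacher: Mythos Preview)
Your argument is correct and, through the step $[\gm_\R,\ga_\R]=0$, identical to the paper's: both use $\gl_\R=[(\gp_\R)_{red},(\gp_\R)_{red}]\subset\gk_\R$ together with the Cartan bracket relations to force $[\gm_\R,\ga_\R]\subset\gm_\R\cap\ga_\R=0$.

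The divergence is in the final step, the commutativity of $\ga_\R$. From $[\gm_\R,\ga_\R]=0$ both you and the paper obtain $[[\ga_\R,\ga_\R],\ga_\R]=0$ (your $\ad(X)^2Y=0$ is exactly this, elementwise). The paper then observes that $[\ga_\R,\ga_\R]$ is an abelian ideal in the \emph{semisimple Lie algebra} $\gl_\R$, hence zero. You instead invoke that each $X\in\ga_\R$ is a \emph{semisimple element} (available precisely because $\gp_\R$ is minimal), so $\ad(X)$ is diagonalizable and $\ker\ad(X)^2=\ker\ad(X)$. Both are clean; the paper's route uses only the Levi structure and would survive in contexts where individual semisimplicity is not known, while yours is arguably more direct here since the minimality hypothesis has already been cashed in to get the Cartan involution. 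Your closing remark about a $d\theta$--stable toral complement is on target but, as you note, unnecessary.
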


\begin{proof}
Since $\gl_\R = [(\gp_\R)_{red}, (\gp_\R)_{red}]$  we compute 
$[\gm_\R,\ga_\R] \subset \gl_\R \cap \ga_\R = 0$.
In particular $[[\ga_\R, \ga_\R],\ga_\R] = 0$.  So $[\ga_\R, \ga_\R]$
is a commutative ideal in the semisimple algebra $\gl_\R$, in other words
$\ga_\R$ is commutative.
\end{proof}

The main result of this section is the following generalization of the
standard decomposition of a finite dimensional real parabolic.  We have
formulated it to emphasize the parallel with the finite dimensional case.
However some details of the construction are rather different; see 
Proposition \ref{construct-p} and the discussion leading up to it.

\begin{theorem}\label{lang-alg}
The minimal parabolic subalgebra $\gp_\R$ of $\gg_\R$ decomposes as
$\gp_\R = \gm_\R + \ga_\R + \gn_\R =
\gn_\R \subsetplus (\gm_\R \oplus \ga_\R)$, where $\ga_\R$ is commutative,
the Levi component $\gl_\R$ is an ideal in $\gm_\R$\,, and $\gn_\R$ is 
the linear nilradical $(\gp_\R)_{nil}$.  On the group level,
$P_\R = M_\R A_\R N_\R = N_\R \ltimes (M_\R \times A_\R)$ where
$N_\R = \exp(\gn_\R)$ is the linear unipotent radical of $P_\R$, 
$A_\R = \exp(\ga_\R)$ is isomorphic to a vector group, and 
$M_\R = P_\R \cap K_\R$ is limit--compact with Lie algebra $\gm_\R$\, .
\end{theorem}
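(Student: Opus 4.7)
The plan is to assemble the statement from the Chevalley decomposition recalled in Section \ref{sec5} and Lemma \ref{construct-ma}, and then to integrate the three pieces on the group level.

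On the Lie algebra side I would first record that $\gp_\R$ is splittable: its complexification $\gp_\C$ is parabolic in a classical $\gg_\C$ hence splittable, and splittability descends to the $\tau$-stable real form. Section \ref{sec5} then delivers the Chevalley decomposition $\gp_\R = (\gp_\R)_{nil} \subsetplus (\gp_\R)_{red}$, so $\gn_\R := (\gp_\R)_{nil}$ is the linear nilradical. Lemma \ref{construct-ma} further splits $(\gp_\R)_{red} = \gm_\R \oplus \ga_\R$ into ideals, with $\ga_\R$ commutative. Combining gives $\gp_\R = \gn_\R \subsetplus (\gm_\R \oplus \ga_\R)$. To see that $\gl_\R$ is an ideal in $\gm_\R$ I would use $\gl_\R = [(\gp_\R)_{red},(\gp_\R)_{red}]$ together with commutativity of $\ga_\R$ and the fact that $\gm_\R$ and $\ga_\R$ are orthogonal ideals; the bracket collapses to $[\gm_\R,\gm_\R] \subset \gm_\R$, which is automatically an ideal of $\gm_\R$.

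Next I would lift to groups. Elements of $\gn_\R$ are nilpotent linear operators, so the exponential series terminates on each finite-dimensional $\gn_\R$-invariant subspace of the standard representation, and $N_\R := \exp(\gn_\R)$ is a well-defined unipotent subgroup. For $\ga_\R$, minimality of $\gp_\R$ combined with Proposition \ref{minlevi} forces every element of $(\gp_\R)_{red}$ to be semisimple; since $\ga_\R \subset \gs_\R$, its elements are simultaneously real-diagonalizable, so $\exp : \ga_\R \to A_\R$ is a bijection onto a vector group. I set $M_\R := P_\R \cap K_\R$ and observe $\gn_\R \cap \gk_\R = 0$ (nilpotent versus elliptic) and $\ga_\R \cap \gk_\R = 0$ ($\ga_\R \subset \gs_\R$), so the Lie algebra of $M_\R$ equals $\gp_\R \cap \gk_\R = \gm_\R$; lim-compactness of $M_\R$ follows from Proposition \ref{minlevi} together with the fact that the toral complement of $\gl_\R$ inside $\gm_\R$ already lies in $\gk_\R$.

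The main obstacle will be the global assembly $P_\R = N_\R \ltimes (M_\R \times A_\R)$. I would handle this by exhausting $\gg_\R = \varinjlim \gg_{n,\R}$ by $\theta_n$-stable finite-dimensional reductive subalgebras compatible with the splitting of $(\gp_\R)_{red}$, applying the classical finite-dimensional Langlands decomposition $P_{n,\R} = N_{n,\R} \ltimes (M_{n,\R} \times A_{n,\R})$ at each stage, and passing to the direct limit. The subtlety, visible already in Example \ref{lim-ord}, is that $\gp_\R$ need not be a direct limit of finite-dimensional parabolics, so the exhaustion must be chosen carefully enough that $P_\R \cap G_{n,\R}$ eventually respects all three factors; the $\theta$-stability of $(\gp_\R)_{red}$ and the splittability of $\gp_\R$ make this possible.
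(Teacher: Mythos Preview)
Your algebra--level argument is essentially the paper's: Chevalley decomposition from Section \ref{sec5} gives $\gp_\R = (\gp_\R)_{nil} \subsetplus (\gp_\R)_{red}$, and Lemma \ref{construct-ma} splits the reductive piece. Your remark that $\gl_\R = [(\gp_\R)_{red},(\gp_\R)_{red}] = [\gm_\R,\gm_\R]$ is an ideal of $\gm_\R$ is the right one-line justification.

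The group--level assembly is where your plan diverges, and there is a real gap. You propose exhausting $G_\R$ so that the classical finite--dimensional Langlands decomposition applies to $P_\R \cap G_{n,\R}$, and you acknowledge the obstacle that $\gp_\R$ need not be a limit of finite--dimensional parabolics. But your remedy---``choose the exhaustion carefully so that $P_\R \cap G_{n,\R}$ respects all three factors''---is exactly what may be impossible: the paper states flatly that $P_\R \cap G_{n,\R}$ \emph{need not} be parabolic in $G_{n,\R}$, and flags such as those in Example \ref{lim-ord} (indexed by order types with infinitely many limit ordinals) obstruct any choice of exhaustion that would make it so. Appealing to $\theta$--stability of $(\gp_\R)_{red}$ and splittability does not by itself produce a parabolic intersection.

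The paper avoids this entirely. It observes that once the Lie algebra decomposition is in hand, the only global issue is whether $M_\R = P_\R \cap K_\R$ meets every topological component of $P_\R$; the identity component already factors as required. To settle components, it replaces the non--parabolic $P_\R \cap G_{n,\R}$ by the $\theta_n$--stable reductive group $P_\R \cap \theta P_\R \cap G_{n,\R}$, where the classical fact that $K_{n,\R}$ meets every component applies directly. Since $N_\R = \exp(\gn_\R)$ is connected unipotent, it does not affect components, and one concludes that each component of $P_\R$ contains a point of $K_\R \cap P_\R = M_\R$. This component argument via $P_\R \cap \theta P_\R$ is the missing idea in your proposal.
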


\begin{proof}
The algebra level statements come out of Lemma \ref{construct-ma} and the
semidirect sum decomposition $\gp_\R = (\gp_\R)_{nil} \subsetplus (\gp_\R)_{red}$.
\smallskip

For the group level statements, we need only check that $K_\R$ meets every
topological component of $P_\R$.  Even though $P_\R\cap G_{n,\R}$ need not
be parabolic in $G_{n,\R}$, the group $P_\R\cap\theta P_\R\cap G_{n,\R}$ is
reductive in $G_{n,\R}$ and $\theta_n$--stable, so $K_{n,\R}$ meets each of
its components.  Now $K_\R$ meets every component of $P_\R\cap\theta P_\R$.
The linear unipotent radical of $P_\R$ has Lie algebra $\gn_\R$ and thus
must be equal to $\exp(\gn_\R)$, so it does not effect components.  Thus
every component of $P_{red}$ is represented by an element of 
$K_\R \cap P_\R\cap\theta P_\R = K_\R \cap P_\R = M_\R$.  That derives
$P_\R = M_\R A_\R N_\R = N_\R \ltimes (M_\R \times A_\R)$ from
$\gp_\R = \gm_\R + \ga_\R + \gn_\R = \gn_\R \subsetplus (\gm_\R \oplus \ga_\R)$.
\end{proof}

The reductive part of the group $\gp_\R$ can be constructed explicitly.
We do this for the cases where $\gg_\R$ is defined by a hermitian 
form $f: V_\F \times V_\F \to \F$ where $\F$ is $\R$, $\C$ 
or $\H$.  The idea is the same for the other cases.
See Proposition \ref{construct-p} below.
\smallskip

Write $V_\F$ for $V_\R$, $V_\C$ or $V_\H$, as appropriate, 
and similarly for $W_\F$.  We use $f$ for an 
$\F$--conjugate--linear identification of $V_\F$ and $W_\F$.
We are dealing with a minimal Levi component $\gl_\R =
\bigoplus_{j \in J}\, \gl_{j,\R}$ where the $\gl_{j,\R}$ are simple.  
Let $X_\F$ denote the sum of the
corresponding subspaces $(X_j)_\F \subset V_\F$ and $Y_\F$ the
analogous sum of the $(Y_j)_\F \subset W_\F$.  Then $X_\F$ and $Y_\F$
are nondegenerately paired.  Of course they may be small, even zero.
In any case, 
\addtocounter{equation}{1}
\begin{equation}\label{fill-out-VW}\tag{\thesection.\arabic{equation}}
\begin{aligned}
&V_\F = X_\F \oplus Y_\F^\perp \, ,
W_\F = Y_\F \oplus X_\F^\perp, \text{ and } \\
&X_\F^\perp \text{ and } Y_\F^\perp
\text{ are nondegenerately paired.}
\end{aligned}
\end{equation}
These direct sum decompositions (\ref{fill-out-VW}) now become
\addtocounter{equation}{1}
\begin{equation}\label{fill-out-V}\tag{\thesection.\arabic{equation}}
V_\F = X_\F \oplus X_\F^\perp \quad \text{ and } \quad f 
\text{ is nondegenerate on each summand.} 
\end{equation}
Let $X'$ and $X''$ be paired maximal isotropic subspaces of $X_\F^\perp$.  Then 
\addtocounter{equation}{1}
\begin{equation}\label{expand-V}\tag{\thesection.\arabic{equation}}
V_\F = X_\F \oplus (X'_\F \oplus X''_\F) \oplus Q_\F \text{ where }
Q_\F := (X_\F \oplus (X'_\F \oplus X''_\F))^\perp .
\end{equation}

The subalgebra 
$\{\xi \in \gg_\R \mid \xi(X_\F \oplus Q_\F) = 0\}$
of $\gg_\R$ has a maximal toral subalgebra $\ga\dagger_\R$, contained in 
$\gs_\R$, 
in which every element has all eigenvalues real.  One example, which is
diagonalizable (in fact diagonal) over $\R$, is
\addtocounter{equation}{1}
\begin{equation}\label{def-a'}\tag{\thesection.\arabic{equation}}
\begin{aligned}
\ga^\dagger_\R = {\bigoplus}_{\ell \in C}\, \ggl(x_\ell'\R,x_\ell''\R) 
	&\text{ where }\\
	& \{x'_\ell \mid \ell \in C\} \text{ is a basis of } X'_\F 
	\text{ and } \\
	& \{x''_\ell \mid \ell \in C\} \text{ is the dual basis of } X''_\F.
\end{aligned}
\end{equation}
We interpolate the self--taut semiclosed generalized flag $\cF$ defining $\gp$
with the subspaces $x_\ell'\R \oplus x_\ell''\R$.  Any such interpolation
(and usually there will be infinitely many) gives a self--taut semiclosed
generalized flag $\cF^\dagger$ and defines a minimal self--normalizing 
parabolic subalgebra $\gp^\dagger_\R$ of $\gg_\R$ with the same Levi component 
as $\gp_\R$.  The decompositions corresponding to (\ref{fill-out-VW}),
(\ref{fill-out-V}) and (\ref{expand-V}) are given by 
$X_\F^\dagger = X_\F \oplus (X'_\F \oplus X''_\F)$ and $Q_\F^\dagger = Q_\F$.
\smallskip

In addition, the subalgebra
$\{\xi \in \gp_\R \mid \xi(X_\F  \oplus (X'_\F \oplus X''_\F)) = 0\}$
has a maximal toral subalgebra $\gt'_\R$ in which every eigenvalue is pure 
imaginary, because $f$ is definite on $Q_\F$.  It is
unique because it has derived algebra zero and is given by the action of the 
$\gp_\R$--stabilizer of $Q_\F$ on the definite subspace $Q_\F$.  
This uniqueness tell us that $\gt'_\R$ is the same for $\gp_\R$ and 
$\gp^\dagger_\R$.
\smallskip

Let $\gt''_\R$ denote the maximal toral subalgebra in
$\{\xi \in \gp_\R \mid \xi(X_\F \oplus Q_\F)) = 0\}$.  It stabilizes each 
Span($x'_\ell ,x''_\ell$) in (\ref{def-a'}) and centralizes $\ga^\dagger_\R$,
so it vanishes if $\F \ne \C$.  The $\gp_\R^\dagger$ analog of $\gt''_\R$
is $0$ because $X^\dagger_\F \oplus Q_\F = 0$.  In any case we have
\addtocounter{equation}{1}
\begin{equation}\label{def-t}\tag{\thesection.\arabic{equation}}
\gt_\R = \gt_\R^\dagger := \gt'_\R \oplus \gt''_\R\,.
\end{equation}

For each $j \in J$ we define
an algebra that contains $\gl_{j,\R}$ and acts on $(X_j)_\F$ by:
if $\gl_{j,\R} = \gsu(*)$ then $\widetilde{\gl_{j,\R}} = \gu(*)$ (acting
        on $(X_j)_\C$); otherwise $\widetilde{\gl_{j,\R}} = \gl_{j,\R}$.
Define 
\addtocounter{equation}{1}
\begin{equation}\label{def-m'}\tag{\thesection.\arabic{equation}}
\widetilde{\gl_\R} = 
  \bigoplus_{j \in J}\, \widetilde{\gl_{j,\R}} \quad
\text{ and } \quad \gm^\dagger_\R = \widetilde{\gl_\R} + \gt_\R\, .
\end{equation}
Then, by construction, $\gm^\dagger_\R = \gm_\R$.  Thus 
$\gp^\dagger_\R$ satisfies
\addtocounter{equation}{1}
\begin{equation}\label{def-p'}\tag{\thesection.\arabic{equation}}
\gp^\dagger_\R := \gm_\R +\ga^\dagger_\R + \gn^\dagger_\R=
	\gn^\dagger_\R \subsetplus (\gm_\R \oplus \ga^\dagger_\R).
\end{equation}
Let $\gz_\R$ denote the centralizer of $\gm_\R \oplus \ga_\R$ in $\gg_\R$
and let $\gz^\dagger_\R$ denote the centralizer of 
$\gm_\R \oplus \ga^\dagger_\R$ in $\gg_\R$.  We claim
\addtocounter{equation}{1}
\begin{equation}\label{red-parts}\tag{\thesection.\arabic{equation}}
\gm_\R + \ga_\R = \widetilde{\gl_\R} + \gz_\R \text{ and }
	\gm_\R + \ga^\dagger_\R = \widetilde{\gl_\R} + \gz^\dagger_\R
\end{equation}
For by construction $\gm_\R \oplus \ga_\R =
\widetilde{\gl_\R} + \gt_\R + \ga_\R \subset \widetilde{\gl_\R} + \gz_\R$.
Conversely if $\xi \in \gz_\R$ it preserves each $X_{j,\F}$, each joint
eigenspace of $\ga_\R$ on $X'_\F \oplus X''_\F$, and each joint
eigenspace of $\gt_\R$, so $\xi \subset \widetilde{\gl_\R} + \gt_\R + \ga_\R$.
Thus $\gm_\R + \ga_\R = \widetilde{\gl_\R} + \gz_\R$.  The same argument
shows that $\gm_\R + \ga^\dagger_\R = \widetilde{\gl_\R} + \gz^\dagger_\R$.
\smallskip

If $\ga_\R$ is diagonalizable as in the definition (\ref{def-a'}) of 
$\ga^\dagger_\R$, in other words if it is a sum of standard $\ggl(1;\R)$'s,
then we could choose $\ga_\R^\dagger = \ga_\R$, hence could
construct $\cF^\dagger$ equal to $\cF$, resulting in $\gp_\R = \gp_\R^\dagger$.
In summary:

\begin{proposition}\label{construct-p}
Let $\gg_\R$ be defined by a hermitian form and let $\gp_\R$ be a minimal
self--normalizing parabolic subalgebra. In the notation above, $\gp^\dagger_\R$
is a minimal self--normalizing parabolic subalgebra of $\gg_\R$ with 
$\gm^\dagger_\R = \gm_\R$.  In particular $\gp^\dagger_\R$ and $\gp_\R$ 
have the same Levi component.  Further we can take $\gp_\R = \gp^\dagger_\R$ 
if and only if $\ga_\R$ is the sum of commuting standard $\ggl(1;\R)$'s.
\end{proposition}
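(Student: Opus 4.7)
The plan is to verify, in order, that $\gp^\dagger_\R$ is a genuine self--normalizing parabolic subalgebra, that it is minimal, that it shares the Levi component of $\gp_\R$ (together with the stronger equality $\gm^\dagger_\R = \gm_\R$), and finally to settle the if--and--only--if characterization. Most of the structural work has been done in the construction preceding the proposition, so the proof is largely one of verification.

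First, I would check that $\cF^\dagger$, built by interpolating the self--taut semiclosed generalized flag $\cF$ with the one--dimensional subspaces $\text{Span}_\F(x'_\ell)$ and $\text{Span}_\F(x'_\ell,x''_\ell)$ coming from (\ref{def-a'}), is again a self--taut semiclosed generalized flag. Closedness and the taut couple/self--taut conditions survive the interpolation because the hermitian form identifies $X'_\F$ and $X''_\F$ as dually paired isotropic subspaces of $X_\F^\perp$. Theorems \ref{self-norm-cpx-parab}, \ref{self-norm-cpx-parab-so}, and \ref{real-parab} then identify $\gp^\dagger_\R$ as a self--normalizing real parabolic subalgebra of $\gg_\R$.

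Second, I would compute the complex Levi component of $\gp^\dagger_\C$ using Theorem \ref{glpar} or \ref{sosplevi}. The IPS pairs of $\cF^\dagger$ beyond those of $\cF$ all have quotients of dimension one, so by the characterization of $J$ in (\ref{J}) they contribute no new $\gsl$--summands; hence $\gp^\dagger_\C$ and $\gp_\C$ have the same complex Levi component, and by Lemma \ref{real-levi-1} the same real Levi component $\gl_\R$. Since $\gl_\R$ already consists of compact simple algebras and their lim--compact analogs, no further refinement of $\cF^\dagger$ is possible without breaking a simple summand of $\gl_\R$, so by Proposition \ref{minlevi} the parabolic $\gp^\dagger_\R$ is minimal among self--normalizing parabolics.

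Third, to obtain $\gm^\dagger_\R = \gm_\R$, I would apply (\ref{red-parts}) to both $\gp_\R$ and $\gp^\dagger_\R$ and intersect with $\gk_\R$, using Lemma \ref{construct-ma}, to get $\gm_\R = \widetilde{\gl_\R} + (\gz_\R \cap \gk_\R)$ and $\gm^\dagger_\R = \widetilde{\gl_\R} + (\gz^\dagger_\R \cap \gk_\R)$. The toral piece $\gt'_\R$ acts only on the definite subspace $Q_\F$, which is the same for $\gp_\R$ and $\gp^\dagger_\R$, so it contributes identically. The remaining toral piece $\gt''_\R$ (nonzero only when $\F=\C$) acts within the two--dimensional blocks $\text{Span}(x'_\ell,x''_\ell)$: in $\gp_\R$ it lies in $\gz_\R \cap \gk_\R$, while in $\gp^\dagger_\R$ it is absorbed into $\widetilde{\gl_\R}$ through the $\gsu \subset \gu$ extension used to define $\widetilde{\gl_{j,\R}}$ in (\ref{def-m'}). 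Either way the sum $\widetilde{\gl_\R} + \gt_\R$ coincides with the reductive--compact part of the parabolic, as claimed in (\ref{def-m'}).

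Finally, for the equivalence: if $\ga_\R = \bigoplus_\ell \ggl(x'_\ell\R,x''_\ell\R)$ is already a sum of commuting standard $\ggl(1;\R)$'s, then the two--dimensional subspaces $\text{Span}(x'_\ell,x''_\ell)$ already sit in the IPS chain of $\cF$, so the interpolation is trivial, $\cF^\dagger$ may be taken equal to $\cF$, and $\gp^\dagger_\R = \gp_\R$. Conversely, if $\gp_\R = \gp^\dagger_\R$ then in particular $\ga_\R = \ga^\dagger_\R$, which by (\ref{def-a'}) is by construction a sum of commuting standard $\ggl(1;\R)$'s. The main obstacle will be the equality $\gm^\dagger_\R = \gm_\R$ in the complex unitary cases $\gu(p,\infty)$, $\gu(\infty,\infty)$, where one must carefully track how $\gt''_\R$ migrates between the centralizer $\gz_\R$ and the extended Levi factor $\widetilde{\gl_\R}$ under the two different presentations; once this bookkeeping is done the remaining assertions follow directly from the constructions laid out before the proposition.
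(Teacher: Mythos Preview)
Your overall plan coincides with what the paper does: the proposition is stated as a summary of the construction in (\ref{fill-out-VW})--(\ref{red-parts}), so the ``proof'' is exactly the verification you outline, and steps 1, 2, and 4 are carried out correctly and in the same spirit as the paper.

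There is, however, a genuine error in your step 3. You claim that in $\gp^\dagger_\R$ the piece $\gt''_\R$ ``is absorbed into $\widetilde{\gl_\R}$ through the $\gsu \subset \gu$ extension.'' This cannot happen: by definition $\widetilde{\gl_\R} = \bigoplus_j \widetilde{\gl_{j,\R}}$ acts only on $X_\F = \bigoplus_j (X_j)_\F$, while $\gt''_\R$ is the maximal toral subalgebra of $\{\xi \in \gp_\R \mid \xi(X_\F \oplus Q_\F) = 0\}$ and therefore acts only on $X'_\F \oplus X''_\F$. These subspaces are disjoint in the decomposition (\ref{expand-V}), so $\gt''_\R \cap \widetilde{\gl_\R} = 0$. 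Moreover, since $\gp_\R$ and $\gp^\dagger_\R$ have the same Levi component $\gl_\R$, the extended algebra $\widetilde{\gl_\R}$ is literally the same for both parabolics; no new $\gu(1)$ summands appear.

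The correct mechanism, which the paper states in (\ref{def-t}), is that $\gt''_\R$ lies in $\gp^\dagger_\R$ directly: each element of $\gt''_\R$ centralizes $\ga^\dagger_\R$ and stabilizes every $\mathrm{Span}(x'_\ell, x''_\ell)$, hence stabilizes the one--dimensional isotropic lines $x'_\ell\F$ inserted into $\cF^\dagger$. Being compact--toral, $\gt''_\R$ then sits inside $(\gp^\dagger_\R)_{red} \cap \gk_\R = \gm^\dagger_\R$ as part of the toral summand $\gt^\dagger_\R$, not via the Levi. That is the content of the equality $\gt_\R = \gt^\dagger_\R$ in (\ref{def-t}), after which (\ref{def-m'}) gives $\gm^\dagger_\R = \widetilde{\gl_\R} + \gt_\R = \gm_\R$. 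Once you replace your ``absorption'' argument with this direct containment, the rest of your proof goes through.
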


Similar arguments give the construction behind Proposition \ref{construct-p} 
for the other real simple direct limit Lie algebras.

\section{The Inducing Representation} \label{sec8}
\setcounter{equation}{0}
In this section $P_\R$ is a self normalizing minimal parabolic subgroup
of $G_\R$.  We discuss representations of $P_\R$ and the induced 
representations of $G_\R$.  The latter are the {\em principal series} 
representations of $G_\R$ associated to $\gp_\R$, or more precisely to
the pair $(\gl_\R,J)$ where $\gl_\R$ is the Levi component and $J$ is
the ordering on the simple summands of $\gl_\R$.
\smallskip

We must first choose a class $\cC_{M_\R}$ of representations of $M_\R$.
Reasonable choices include various classes of unitary representations 
(we will discuss this in a moment) and continuous 
representations on nuclear Fr\' echet spaces, but ``tame'' (essentially 
the same as $II_1$) may be the best with which to start.  In any case,
given a representation $\kappa$ in our chosen class and a linear functional
$\sigma: \ga_\R \to \R$ we have the representation $\kappa\otimes e^{i\sigma}$
of $M_\R \times A_\R$.  Here $e^{i\sigma}(a)$ means $e^{i\sigma(\log a)}$
where $\log : A_\R \to \ga_\R$ inverts $\exp : \ga_\R \to A_\R$.  We
write $E_\kappa$ for the representation space of $\kappa$.
\smallskip

We discuss some possibilities for $\cC_{M_\R}$.
 Note that $\gl_\R = [(\gp_\R)_{red},(\gp_\R)_{red}]
= [\gm_\R + \ga_\R, \gm_\R + \ga_\R] = [\gm_\R,\gm_\R]$.  
Define
$$
L_\R = [M_\R,M_\R] \text{ and } T_\R = M_\R/L_\R\,.
$$
Then $T_\R$ is a real toral group with all eigenvalues pure imaginary,
and $M_\R$ is an extension
$
1 \to L_\R \to M_\R \to T_\R \to 1 \,.
$
Examples indicate that $M_\R$ is the product of a closed subgroup $T'_\R$
of $T_\R$ with factors of the group $L'_\R$ indicated in the previous section.
That was where we replaced summands $\gsu(*)$ of $\gl_\R$ by slightly
larger algebras $\gu(*)$, hence subgroups $SU(*)$ of $L_\R$ by slightly
larger groups $U(*)$.  There is no need to discuss the representations of the
classical finite dimensional $U(n)$, $SO(n)$ or $Sp(n)$, where we have the
Cartan highest weight theory and other classical combinatorial methods.  So
we look at $U(\infty)$.
\smallskip

{\bf Tensor Representations of $U(\infty)$}.  In the classical setting,
one can use the action of the symmetric group $\gS_n$, permuting factors
of $\otimes^n(\C^p)$. This gives a representation of $U(p) \times \gS_n$.
Then we have the action of $U(p)$ on tensors picked out by an irreducible 
summand of that action of $\gS_n$.  These summands occur with multiplicity $1$.
See Weyl's book \cite{W}.  
Segal \cite{Se}, Kirillov \cite{K}, and Str\u atil\u a \& Voiculescu
\cite{SV}  developed and proved an analog of this for $U(\infty)$.  
However those ``tensor representations'' form a small class of the continuous 
unitary representations of $U(\infty)$.  They are factor representations
of type $II_\infty$, but they are somewhat restricted in that
they do not even extend to the class of unitary operators of the form 
$1 + \text{(compact)}$.  See \cite[Section 2]{SV2} for a summary of this topic.
Because of this limitation one may also wish to consider other classes of factor 
representations of $U(\infty)$.
\smallskip

{\bf Type $II_1$ Representations of $U(\infty)$}.
Let $\pi$ be a continuous unitary finite factor representation of $U(\infty)$.
It has a character $\chi_\pi(x) = \trace \pi(x)$ (normalized trace).
Voiculescu \cite{V} worked out the parameter space for these finite
factor representations.  It consists of all bilateral sequences
$\{c_n\}_{-\infty < n < \infty}$ such that 
(i) $\det((c_{m_i + j - i})_{1 \leqq i, j \leqq N} \geqq 0$ for
          $m_i \in \Z$ and $N \geqq 0$ and 
(ii) $\sum c_n = 1$.
The character corresponding to $\{c_n\}$ and $\pi$ is 
$\chi_\pi(x) = \prod_i p(z_i)$ where $\{z_i\}$ is the multiset of
eigenvalues of $x$ and 
$p(z) = \sum c_nz^n$. Here $\pi$ extends to the group of all unitary operators 
$X$ on the Hilbert space completion of $\C^\infty$ such that $X - 1$ is
of trace class.  See \cite[Section 3]{SV2} for a more detailed summary.
This may be the best choice of class $\cC_{M_\R}$.  It is closely tied to
the Olshanskii--Vershik notion (see \cite{O}) of tame representation.
\smallskip

{\bf Other Factor Representations of $U(\infty)$}.
Let $\cH$ be the Hilbert space completion of $\varinjlim \cH_n$ where
$\cH_n$ is the natural representation space of $U(n)$.  Fix a bounded
hermitian operator $B$ on $\cH$ with $0 \leqq B \leqq I$.  Then
$$
\psi_B : U(\infty) \to \C\,, \text{ defined by }
\psi_B(x) = \det((1 - B) + B x)
$$
is a continuous  function of positive type on $U(\infty)$.  Let $\pi_B$
denote the associated cyclic representation of $U(\infty)$.  Then
(\cite[Theorem 3.1]{SV3}, or see \cite[Theorem 7.2]{SV2}),
\begin{itemize}
\item[(1)] $\,\,\psi_B$ is of type $I$ if and only if $B(I-B)$ is of trace class.
	In that case $\pi_B$ is a direct sum of irreducible representations.
\item[(2)] $\,\,\psi_B$ is factorial and type $I$ if and only if $B$ is a 
	projection.  In that case $\pi_B$ is irreducible.
\item[(3)] $\,\,\psi_B$ is factorial but not of type $I$ if and only if
	$B(I-B)$ is not of trace class.  In that case
\begin{itemize}
  \item[(i)] $\,\,\psi_B$ is of type $II_1$ if and only if $B-tI$ is
	Hilbert--Schmidt where $0 < t < 1$; then $\pi_B$ is a factor
	representation of type $II_1$.
  \item[(ii)] $\,\,\psi_B$ is of type $II_\infty$  if and only if
	(a) $B(I-B)(B-pI)^2$ is trace class where $0 < t < 1$ and (b) the
	essential spectrum of $B$ contains $0$ or $1$; then $\pi_B$ is a factor
	representation of type $II_\infty$.
  \item[(iii)] $\,\,\psi_B$ is of type $III$ if and only if $B(I-B)(B-pI)^2$
	is not of trace class whenever $0 < t < 1$; then $\pi_B$ is a factor
        representation of type $III$.
\end{itemize}
\end{itemize}
Similar considerations hold for $SU(\infty)$, $SO(\infty)$ and $Sp(\infty)$.
This gives an indication of the delicacy in choice of 
type of representations of $M_\R$.  Clearly factor representations of
type $I$ and $II_1$ will be the easiest to deal with.
\smallskip

It is worthwhile to consider the case where the inducing representation
$\kappa\otimes e^{i\sigma}$ is trivial on $M_\R$, in other words is a
unitary character on $P_\R$. In the finite dimensional
case this leads to a $K_\R$--fixed vector, spherical functions on $G_\R$
and functions on the symmetric space $G_\R/K_\R$.  In the infinite
dimensional case it leads to open problems, but there are a few examples
(\cite{DOW}, \cite{W5}) that may give accurate indications.

\section{Parabolic Induction}\label{sec9}
\setcounter{equation}{0}

We view $\kappa\otimes e^{i\sigma}$ as a 
representation $man \mapsto e^{i\sigma}(a)\kappa(m)$ of $P_\R = M_\R A_\R N_\R$
on $E_\kappa$.  It is well defined because $N_\R$ is a closed normal subgroup 
of $P_\R$.  Let $\cU(\gg_\C)$ denote the universal enveloping algebra of 
$\gg_\C$.  The {\em algebraically induced representation} is given on the
Lie algebra level as the left multiplication action of $\gg_\C$ on
$\cU(\gg_\C) \otimes_{\gp_\R} E_\kappa$,
$$
d\pi_{\kappa,\sigma,alg}(\xi): \cU(\gg_\C) \otimes_{\gp_\R} E_\kappa \to
\cU(\gg_\C) \otimes_{\gp_\R} E_\kappa \text{ by } 
 \eta\otimes e \mapsto (\xi\eta)\otimes e.
$$
If $\xi \in \gp_\R$ then
$
d\pi_{\kappa,\sigma,alg}(\xi)(\eta\otimes e) = \Ad(\xi)\eta \otimes e + 
\eta\otimes d(\kappa\otimes e^{i\sigma})(\xi)e\, .
$
To obtain the associated representation $\pi_{\kappa,\sigma}$ of $G_\R$ we
need a $G_\R$--invariant completion of $\cU(\gg_\C) \otimes_{\gp_\R} E_\kappa$
so that the
$\pi_{\kappa,\sigma,alg}(\exp(\xi)) := \exp(d\pi_{\kappa,\sigma,alg}(\xi))$
are well defined.  For example we could use a $C^k$ completion,
$k \in \{0, 1, 2, \dots, \infty, \omega\}$, representation of $G_\R$ on $C^k$ 
sections of the vector bundle $\E_{\kappa\otimes e^{i\sigma}} \to G_\R/P_\R$
associated to the action $\kappa\otimes e^{i\sigma}$ of $P_\R$ on $E_\kappa$.
The representation space is
$$
\{\varphi: G_\R \to  E_\kappa \mid \varphi \text{ is } C^k \text{ and }
  \varphi(xman) = 
  e^{i\sigma}(a)^{-1}\kappa(m)^{-1}f(x)\} 
$$
where $m \in M_\R$\,, $a \in A_\R$ and $n \in N_\R$, and the action of
$G_\R$ is
$[\pi_{\kappa,\sigma,C^k}(x)(\varphi)](z) = \varphi(x^{-1}z)$.
In some cases one can unitarize $d\pi_{\kappa,\sigma,alg}$ by constructing a
Hilbert space of sections of $\E_{\kappa\otimes e^{i\sigma}} \to G_\R/P_\R$.
This has been worked out explicitly when $P_\R$ is a direct limit of minimal
parabolic subgroups of the $G_{n,\R}$ \cite{W5}, and more generally it comes
down to transitivity of $K_\R$ on $G_\R/P_\R$ \cite{W7}.  
In any case the resulting representations of $G_\R$ depend on the choice 
of class $\cC_{M_\R}$ of representations of $M_\R$.

\vskip .2in

Department of Mathematics, University of California,Berkeley, CA 94720--3840, USA

{\tt e-mail: jawolf@math.berkeley.edu}
\end{document}